\theoremstyle{plain}  % Entorno teorema, con letra cursiva
\newtheorem{theorem}{Theorem}[section]
\newtheorem{proposition}[theorem]{Proposition}
\newtheorem{corollary}[theorem]{Corollary}
\newtheorem{conjecture}[theorem]{Conjecture}
\theoremstyle{definition} % Entorno definiciones, con letra roman
\newtheorem{definition}[theorem]{Definition}
\newtheorem{example}[theorem]{Example}
\theoremstyle{remark} % Entorno remarks, con "Remark" sin negrita
\newtheorem{remark}[theorem]{Remark}
\newcommand{\C}{\mathbb{C}}  % Complejos
\newcommand{\A}{\mathscr{A}} % A-equivalence
\newcommand{\icis}{\textsc{icis} } % ICIS
\newcommand{\codimAe}{\textup{codim}_{\A_e}} % Ae_codimension
\newcommand{\depth}{\textup{depth}\,} % depth
\newcommand{\grade}{\textup{grade}\,} % grade
\newcommand{\Ann}{\textup{Ann}\,} % Ann
\newcommand{\pd}{\textup{pd}} % pd
\newcommand{\height}{\textup{ht}\,} % ht
\renewcommand{\O}{\mathscr{O}} % O_n
\newcommand{\F}{\mathscr{F}} % Fitting ideals
\newcommand{\m}{\mathfrak{m}} % maximal ideal
\newcommand{\im}{\textup{im}\,} % image
\newcommand{\rel}{\textup{rel}} % relative module
\renewcommand{\phi}{\varphi} % phi es varphi
\newcommand{\Derlog}{\textup{Derlog}\,} % Derlog
\let\oldtextsc\textsc
\renewcommand{\textsc}[1]{\oldtextsc{\scalefont{1.1}#1}}
\title{Disentangling mappings defined on ICIS }
\author{Alberto Fern\'andez-Hernández, Juan J. Nu\~no-Ballesteros}
\date{\today}
\address{Departament de Matem\`atiques,
Universitat de Val\`encia, Campus de Burjassot, 46100 Burjassot
SPAIN}
\email{alferher@alumni.uv.es}
\address{Departament de Matem\`atiques,
Universitat de Val\`encia, Campus de Burjassot, 46100 Burjassot
SPAIN. Departamento de Matemática, Universidade Federal da Paraíba 
CEP 58051-900, João Pessoa - PB, BRAZIL}
\email{Juan.Nuno@uv.es}
\keywords{Image Milnor number, the Mond conjecture, \textsc{icis}}
\subjclass[2000]{Primary 58K15; Secondary 32S30, 58K40}
\thanks{Work of J. J. Nu\~no-Ballesteros partially supported by Grant PID2021-124577NB-I00 funded by MCIN/AEI/ 10.13039/501100011033 and by ``ERDF A way of making Europe"}
\begin{document}
\maketitle
\begin{abstract}
    We study germs of hypersurfaces $(Y,0)\subset (\C^{n+1},0)$ that can be described as the image of $\A$-finite mappings $f:(X,S)\rightarrow (\C^{n+1},0)$ defined on an \icis $(X,S)$ of dimension $n$. We extend the definition of the Jacobian module given by Fernández de Bobadilla, Nuño-Ballesteros and Peñafort-Sanchis when $X=\C^n$, which controls the image Milnor number $\mu_I(X,f)$. We apply these results to prove the case $n=2$ of the generalised Mond conjecture, which states that $\mu_I(X,f)\geq \codimAe (X,f)$, with equality if $(Y,0)$ is weighted homogeneous. 
\end{abstract}

\section{Introduction}
    The main objects that are studied along this paper are hypersurface singularities $(Y,0)\subset (\C^{n+1},0)$. It is well known that if a hypersurface germ has isolated singularity, then one can define two different invariants to encode information of the hypersurface, namely the Milnor and Tjurina numbers, $\mu$ and $\tau$, respectively. The first of them, $\mu$, has a more topological flavour, since it counts the number of spheres in the homotopy type of a Milnor fibre of $Y$. On the other hand, the Tjurina number has a much more rigid nature, and counts the number of parameters of a versal deformation of the hypersurface. An inspection of the algebraic formulas that define them makes clear that $\mu \geq \tau$, with equality in the case that $(Y,0)$ is weighted homogeneous. 
    
    In \cite{vanishingcycles}, D. Mond extended the definition of $\mu$ to a particular kind of hypersurfaces which could be described as the image of a mapping $f:(\C^n,S)\rightarrow (\C^{n+1},0)$ that has isolated instability, or, equivalently, that has finite codimension. In this case, $f$ is the normalisation mapping of the hypersurface $(Y,0)$. He then introduced the concept of \textit{image Milnor number} $\mu_I(f)$ of the mapping $f$, which is well defined if the dimensions $(n,n+1)$ are Mather's nice dimensions, or if $f$ has corank one, as will be commented on in the following lines. This new invariant played the role of the Milnor number for this kind of hypersurfaces that do not have isolated singularity anymore. Moreover, the \textit{codimension} of $f$, denoted by $\codimAe (f)$, is an analytical invariant that measures the minimum number of parameters required to  fully deform $f$ through versal unfoldings, in the same way the Tjurina number does. In terms of this invariant, one says that a mapping is $\A$\textit{-finite} provided it has finite codimension, or that is \textit{stable} whenever it has codimension 0. Although the invariants are written and described in terms of the mapping $f$, they can be seen to depend only on the analytic class of the image $(Y,0)$, and hence $\mu_I(f)$ and $\codimAe(f)$ are the natural invariants that correspond to $\mu$ and $\tau$ in this different context. The question is therefore natural:
    
    \begin{conjecture}[Mond conjecture] Let $f:(\C^n,S)\rightarrow (\C^{n+1},0)$ be an $\A$-finite mapping, where $(n,n+1)$ are Mather's nice dimensions. Then, $\mu_I(f)\geq \codimAe (f)$, with equality if $f$ is weighted homogeneous. 
    \end{conjecture}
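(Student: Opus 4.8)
\medskip
\noindent\emph{A proof proposal for the case $n=2$.}
The strategy I would follow is to carry the Jacobian module technique of Fern\'andez de Bobadilla, Nu\~no-Ballesteros and Pe\~nafort-Sanchis over from the case $X=\C^n$ to an arbitrary $n$-dimensional \icis source, and then to use the special geometry of surfaces in $\C^{3}$ to close the argument when $n=2$. Fix an $\A$-finite $f:(X,S)\to(\C^{n+1},0)$ with $(X,S)$ an \icis of dimension $n$; write $X=\Psi^{-1}(0)$ for a flat $\Psi:(\C^N,0)\to(\C^p,0)$, and $(Y,0)=(\im f,0)$ with reduced equation $h\in\O_{\C^{n+1}}$. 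The first thing to note is that an \icis is Cohen--Macaulay, so $f_*\O_X$ is a Cohen--Macaulay $\O_{\C^{n+1}}$-module of dimension $n$, hence of projective dimension one over $\O_{\C^{n+1}}$ by Auslander--Buchsbaum; this is exactly what the Mond--Pellikaan description of the image through Fitting ideals, $I(Y)=F_{0}(f_*\O_X)=(h)$, requires, so that calculus transfers with no essential change. One also has to check that $\codimAe(X,f)=\dim_\C T^1_{\A_e}(X,f)$, where $T^1_{\A_e}(X,f)$ sits in an exact sequence recording as well the $\K_e$-codimension of $(X,S)$, and that mini-versal unfoldings of the pair $(X,f)$ exist precisely when $f$ is $\A$-finite.

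Next I would set up the disentanglement. Choose a one-parameter stable unfolding $F=(f_u,u):(\X,S)\to(\C^{n+1}\times\C,0)$ of $(X,f)$, where $\X$ is an \icis of dimension $n+1$ with central fibre $X$ and $f_0=f$; let $(\mathscr{Y},0)\subset(\C^{n+2},0)$ be the image of $F$, a hypersurface with reduced equation $H$, and let $g$ be $F$ regarded as a finite, generically one-to-one map onto $\mathscr{Y}$. For small generic $u$ the fibre $Y_u$ is the disentanglement of $(X,f)$, and, extending the theorem of Fern\'andez de Bobadilla and Nu\~no-Ballesteros to \icis sources, one proves $Y_u$ has the homotopy type of a wedge of $\mu_I(X,f)$ copies of $S^{n}$, so that $\mu_I(X,f)$ is well defined (for $n=2$ the pair $(2,3)$ lies in Mather's nice dimensions, so no auxiliary hypothesis is needed). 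Then define the Jacobian module $M=M(g)$ by the same Jacobian-type construction on a presentation matrix of $g_*\O_{\X}$ over $\O_{\C^{n+2}}$ as in the smooth-source case; it is a finitely generated module over $\O_{\C,0}$ through the unfolding parameter $u$, and $\dim_\C M$ will denote the $\C$-dimension of its special fibre $M/uM$.

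With $M$ in hand, the proof reduces to two comparisons. First, $\codimAe(X,f)\le\dim_\C M$: there is a surjection $M/uM\twoheadrightarrow T^1_{\A_e}(X,f)$ whose kernel is governed by the \icis structure, and when $(Y,0)$ is weighted homogeneous the associated Euler field splits the relevant sequences and turns this into an equality. Second --- and this is where $n=2$ is essential --- $\mu_I(X,f)=\dim_\C M$: one shows $M$ is Cohen--Macaulay over $\O_{\C,0}$, so that $u$ is a non-zero-divisor and the length of $M/uM$ agrees with that of a generic fibre $M_u$, and an image-computing spectral sequence identifies $\dim_\C M_u$ with $\dim_\C\widetilde H^{n}(Y_u)=\mu_I(X,f)$. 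Putting the two together,
\[
\codimAe(X,f)\ \le\ \dim_\C M\ =\ \mu_I(X,f),
\]
with equality on the left exactly when $(Y,0)$ is weighted homogeneous --- which is the statement for $n=2$.

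The hard part will be the Cohen--Macaulayness of $M$ together with the compatible degeneration of the image-computing spectral sequence. Both are within reach for $n=2$ because the double-point space $D^{2}(F)$ of the unfolding is then a Cohen--Macaulay curve and its image $D(F)\subset(\C^{3},0)$ is a Cohen--Macaulay germ of codimension two, so that Hilbert--Burch applies, the spectral sequence has only two nonzero rows, and the alternating sum of lengths controlling $M$ can be expressed through $\mu_I(X,f)$ and the genuinely determinantal data of $D(F)$, all of which are conserved under deformation; for $n\ge 3$ the higher multiple-point spaces need not be Cohen--Macaulay and this step breaks down, which is why the statement remains a conjecture in general. A further, more technical, difficulty specific to this paper is to rebuild the whole supporting apparatus --- multiple-point spaces, the image-computing spectral sequence, the disentanglement theorem, and the deformation theory of the pair $(X,f)$ --- for maps defined on a singular \icis rather than a smooth germ, the Cohen--Macaulayness of $\O_X$ playing throughout the role that freeness plays in the classical case.
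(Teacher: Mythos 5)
The statement you are addressing is stated in the paper only as a conjecture; what the paper actually proves is its generalisation to \icis sources in the case $n=2$, which is also what your sketch targets, so I will compare against that proof. Your overall strategy (a Jacobian module $M$ whose special fibre dominates $\codimAe(X,f)$ and whose Cohen--Macaulayness forces $\dim_\C M=\mu_I(X,f)$) is the right one, but there is a genuine gap at the very first step: you never say how to \emph{define} the Jacobian module for a map whose source is a singular \icis, and you relegate this to a ``further, more technical, difficulty''. It is in fact the central difficulty, and the whole construction of \cite{BobadillaNunoPenafort} --- Piene's relation between $\partial_l\hat g\circ\hat f$ and the maximal minors of $d\hat f$, the identity $\F_1(\hat f)\cdot\O_{n+k}=C(\hat f)$ of Mond--Pellikaan, the ramification ideal --- genuinely requires a smooth source. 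The paper's solution is to replace $f$ by $\hat f=(\tilde f,h):(\C^{n+k},S)\to(\C^{n+1+k},0)$, where $h$ cuts out the \icis: this is a finite map with \emph{smooth} source which unfolds $f$, and $M(g)$ is defined as a specialisation $N(\hat g)\otimes\O_{n+1+k}/\m_k\O_{n+1+k}$ of a module attached to $\hat f$. Even then the definition must be asymmetric --- $N(\hat g)$ is the kernel of $\F_1(\hat f)/J_y(\hat g)\to C(\hat f)/(J(\hat g)\cdot\O_{n+k})$, with only the $y$-derivatives upstairs but the full Jacobian ideal downstairs, because $J(\hat g)\cdot\O_{n+k}\neq J_y(\hat g)\cdot\O_{n+k}$ in general --- and this asymmetry is exactly what makes the module specialise correctly. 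Your proposal, which works directly with a presentation of $g_*\O_{\mathfrak X}$ over the singular total space of a one-parameter unfolding, does not supply a substitute for this; note also that a \emph{stable} unfolding of $(X,f)$ will in general need $k+r$ parameters (one must smooth the \icis as well as stabilise the map), whereas your multiplicity argument over $\O_{\C,0}$ presumes a one-parameter stable unfolding exists.

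The second place where your route diverges and becomes doubtful is the Cohen--Macaulayness step. The paper does not use multiple-point spaces, Hilbert--Burch, or the image-computing spectral sequence at all: it applies the depth lemma to $0\to M_\rel(G)\to\F_1(F)/J_y(G)\to C(F)/(J_{y,z}(G)\cdot\O_{n+k+r})\to0$, notes that the right-hand term is the determinantal ring $\O_{n+1+k+r}/R(F)$ (Cohen--Macaulay of dimension $n+k+r-2$ for $n\ge2$), and then proves $\F_1(F)/J_y(G)$ is Cohen--Macaulay via Pellikaan's theorem, using that $\pd(\O_{n+1+k+r}/\F_1(F))=2$ and that $J_y(G)$ has exactly $n+1=3$ generators --- this is precisely where $n=2$ enters, since only then does $\dim\F_1(F)/J_y(G)=n+k+r-2$ equal $k+r$. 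Your alternative, via $D^2$ being a Cohen--Macaulay curve and a two-row spectral sequence, also has its dimensions confused (for the unfolding $F:\mathfrak X^{3}\to\C^{4}$ the double-point space is a surface, not a curve) and would in any case need the whole multiple-point apparatus rebuilt over a singular source, which is the problem you started with. Finally, for $\mu_I=\dim_\C M$ the paper uses Siersma's theorem (summing Milnor numbers of $g_w$ at isolated singular points off the image) together with conservation of the Samuel multiplicity $e(\m_{k+r},M_\rel(G))$, rather than a degeneration of the spectral sequence; your inequality $\codimAe(X,f)\le\dim_\C M$ with equality in the weighted homogeneous case is correct in spirit and corresponds to the paper's exact sequence with kernel $K(g)=(J(g)+(g))/J(g)$.
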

    The only known cases are $n=1,2$, and it is still an open problem if $n\geq 3$. This conjecture is therefore the extension of the $\mu\geq \tau $ statement for hypersurfaces that admit an $\A$-finite normalisation mapping with normal space $(\C^n,S)$. Nevertheless, the amount of hypersurfaces satisfying this property is limited. In particular, one would have interest in studying this statement if the normal space of $(Y,0)$ is singular. More precisely, in this article we study the case of hypersurfaces whose normalisation is $\A$-finite and whose normal space is an isolated complete intersection singularity (\textsc{icis}). 
    
    In \cite{Mond-Montaldi}, D. Mond and J. Montaldi settled the deformation theory of mappings $f:(X,S)\rightarrow (\C^p,0)$ with $(X,S)$ being an \textsc{icis} of dimension $n$, and they took the first steps to study an extension of the Mond conjecture in this general setting.
    
    It turns out that the notions of $\codimAe(X,f)$ and $\mu_I(X,f)$ can be defined in this wider context, as Mond and Montaldi showed. The former invariant, namely the codimension of $(X,f)$, equals the minimum number of parameters of a versal unfolding of $(X,f)$, where now unfoldings are allowed to deform both the mapping $f$ and the space $X$. On the other hand, the definition of $\mu_I(X,f)$ is given in the same way as it is done for mappings with smooth source. In order to properly define it, one requires the existence of \textit{stabilisations}, which are deformations $f_t$ of $f$ with the property that $f_t$ is stable for every $t\neq 0$ small enough. These particular deformations do only exist provided $(n,n+1)$ are nice dimensions or provided $f$ has corank one. In these terms, it can be checked that the images $Y_t=\im f_t$, which are called \textit{disentanglements} of $Y$, all have the homotopy type of a wedge of $n$-spheres. The number of such spheres is therefore defined as the image Milnor number $\mu_I(X,f)$. 
    
    This settles the framework to study in this general case whether $\mu_I(X,f)\geq \codimAe (X,f)$, with equality in the weighted homogeneous case. This question is what we refer to as the generalised Mond conjecture. The only case that was known before this article is studied in \cite{juanjo-henrique} by D. Henrique and J.J. Nuño-Ballesteros, and provides a postive answer for the generalised Mond conjecture if $n=1$ and $(X,S)\subset (\C^2,0)$ is a plane curve. 

    Our main contribution in this article is that we prove the generalised Mond conjecture in the case that $n=2$ in its whole generality. Therefore, we show that the $\mu\geq \tau$ statement can be extended to surfaces $(Y,0)$ in $(\C^3,0)$ with $\A$-finite normalisation mapping on an \textsc{icis}. Formally, the main theorem of this paper is the following result:

    \begin{theorem}\label{thrm:main} Let $f:(X,S)\rightarrow (\C^3,0)$ be an $\A$-finite mapping where $(X,S)$ is an \textsc{icis} of dimension $2$ and with image $(Y,0)$. Then, $$\mu_I(X,f)\geq \codimAe (X,f),$$ 
    with equality if $(Y,0)$ is weighted homogeneous. 
    \end{theorem}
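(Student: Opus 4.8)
The plan is to interpose the Jacobian module between the two invariants. Write $(Y,0)=V(g)$ with $g\in\O_3$ reduced. Since $(2,3)$ lie in Mather's nice dimensions, the pair $(X,f)$ admits a stabilisation $F=(f_t,t)\colon(\X,S)\to(\C^3\times\C,0)$, whose source $X_t$ is a smoothing of the \textsc{icis} $(X,S)$ and whose image is a hypersurface $\mathcal Y=V(G)$ with $G\in\O_4$. Because $(X,S)$ is a complete intersection, the pushforward $f_*\O_X$ is a Cohen--Macaulay $\O_3$-module of dimension two, so by Auslander--Buchsbaum it admits a free resolution of length one; feeding a presentation matrix of $f_*\O_X$ into the construction of Fern\'andez de Bobadilla--Nu\~no-Ballesteros--Pe\~nafort-Sanchis then yields a finite-length \emph{Jacobian module} $M(g)$ over $\O_3$, together with a relative version $M(G)$ over $\O_4$ with $M(G)/tM(G)\cong M(g)$. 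The theorem reduces to:
\begin{enumerate}
\item[(A)] $\codimAe(X,f)\le\dim_\C M(g)$, with equality when $(Y,0)$ is weighted homogeneous;
\item[(B)] $\mu_I(X,f)=\dim_\C M(g)$.
\end{enumerate}
Granting (A) and (B) one gets $\mu_I(X,f)=\dim_\C M(g)\ge\codimAe(X,f)$, with equality as soon as $(Y,0)$ is weighted homogeneous, which is the assertion.

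For (A) I would carry over to the \textsc{icis} setting the comparison of Mond and Montaldi between $T^1_{\A_e}(X,f)$ and the logarithmic data of the image: one identifies $T^1_{\A_e}(X,f)$ with a quotient of $M(g)$, which gives the inequality, and when $g$ is weighted homogeneous the kernel of this quotient is annihilated by the Euler vector field, so the two dimensions agree. This is bookkeeping once the modules are set up correctly — the only genuine subtlety being that $\codimAe(X,f)$ now also registers deformations of the \textsc{icis} $(X,S)$, which must be matched on the module side. I do not expect the main difficulty to lie here.

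The heart of the matter is (B), and this is where $n=2$ is indispensable. The disentanglement $Y_t$ ($t\ne0$) is a wedge of $\mu_I(X,f)$ copies of $S^2$, and by the image-computing spectral sequence its reduced cohomology is assembled from the multiple-point spaces $D^1(f_t)=X_t$, the double-point curve $D^2(f_t)$, and the finite triple-point set $D^3(f_t)$ — each a smoothing of an \textsc{icis}, since $f$ is $\A$-finite. Tracking these contributions identifies $\mu_I(X,f)$ with the generic rank of $M(G)$ as a $\C\{t\}$-module, while $\dim_\C M(g)$ is its special fibre dimension; hence (B) is \emph{equivalent} to $M(G)$ being $\C\{t\}$-free, i.e.\ Cohen--Macaulay of dimension one over $\O_4$. \textbf{Proving this Cohen--Macaulayness is the main obstacle}, and it is exactly the step that at present does not extend to $n\ge3$. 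For $n=2$ it is within reach because $\mathcal Y$ is a Cohen--Macaulay threefold and one can show $\pd_{\O_4}M(G)=3$, equivalently $\depth M(G)=1$: the presentation matrix of $M(G)$ has Fitting ideals of the codimensions predicted by the Buchsbaum--Rim/Eagon--Northcott estimates, since $M(G)$ is locally free away from the instability locus of $F$, which is at most one-dimensional by $\A$-finiteness, and this forces $t$ to be a nonzerodivisor on $M(G)$. Equivalently, and more geometrically, the spectral-sequence decomposition reduces (B) to $\mu\ge\tau$ (with equality in the weighted homogeneous case) for the \textsc{icis} curve $D^2(f)$ — which is the $n=1$ theorem of Henrique and Nu\~no-Ballesteros — together with the easy zero-dimensional analogue for $D^3(f)$ and the classical $\mu\ge\tau$ for the \textsc{icis} $(X,S)$, the remaining work being to show that the invariants of $(X,f)$ split along the multiple-point spaces exactly as in the smooth-source case, a splitting only partially established by Mond and Montaldi. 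Reconciling the \textsc{icis} structure of $(X,S)$ with the resolution-length and codimension estimates underpinning either route is the technical core of the argument.
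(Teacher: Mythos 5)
Your high-level architecture agrees with the paper's: interpose the Jacobian module, prove (A) $\codimAe(X,f)\le\dim_\C M(g)$ with equality in the weighted homogeneous case and (B) $\mu_I(X,f)=\dim_\C M(g)$, and recognise that (B) hinges on a Cohen--Macaulay property of the relative module that is special to $n=2$. However, there are two genuine gaps. First, the construction of $M(g)$ itself. You propose to ``feed a presentation matrix of $f_*\O_X$ into the construction'' of Fern\'andez de Bobadilla--Nu\~no-Ballesteros--Pe\~nafort-Sanchis, but that construction is not a formal function of a presentation matrix: it needs Piene's theorem relating the partial derivatives of the defining equation to the minors of the Jacobian matrix of the normalisation, i.e.\ it needs a \emph{smooth} source. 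The paper's essential move, absent from your proposal, is to write $(X,S)=h^{-1}(0)\subset(\C^{n+k},S)$ and replace $f$ by the finite map $\hat f=(\tilde f,h)\colon(\C^{n+k},S)\to(\C^{n+1+k},0)$ with smooth source, of which $f$ is the fibre over $0\in\C^k$; $M(g)$ is then defined by specialising a module $N(\hat g)$ built from $\hat f$. Even there a subtlety arises that your sketch cannot see: one must use the partial Jacobian ideal $J_y(\hat g)$ in the numerator but the full $J(\hat g)\cdot\O_{n+k}$ in the target, because $J(\hat g)\cdot\O_{n+k}\ne J_y(\hat g)\cdot\O_{n+k}$ in general (unlike the smooth case), and the naive definition does not specialise correctly.

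Second, the Cohen--Macaulayness of the relative module --- which you correctly flag as the heart of the matter --- is only gestured at. Your Buchsbaum--Rim/Eagon--Northcott argument (``the Fitting ideals have the predicted codimensions \dots\ this forces $t$ to be a nonzerodivisor'') is not a proof: local freeness away from the instability locus bounds the support of the torsion but does not by itself exclude embedded or zero-dimensional torsion, which is exactly what must be ruled out. Your alternative route through the image-computing spectral sequence and the $n=1$ theorem for the double-point curve is speculative, and you yourself note the required splitting is ``only partially established''. The paper instead gives a complete argument: it applies the depth lemma to $0\to M_{\rel}(G)\to\F_1(F)/J_y(G)\to C(F)/(J_{y,z}(G)\cdot\O_{n+k+r})\to 0$, uses that the cokernel is determinantal (hence Cohen--Macaulay of dimension $n+k+r-2$), and proves that $\F_1(F)/J_y(G)$ is perfect via Pellikaan's theorem, using that $\O/\F_1(F)$ is determinantal of projective dimension $2$ and that $J_y(G)$ has exactly $n+1=3$ generators, with the grade count $n+1+r-(n+r-2)=3$ working out only for $n=2$. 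Until you supply an argument at this level of precision for the depth bound, the proposal does not close.
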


    In order to prove it, we adapt the construction of the module $M(g)$ and its relative version for unfoldings $M_\rel (G)$ that were defined in \cite{BobadillaNunoPenafort} by J. Fernández de Bobadilla, J. J. Nuño-Ballesteros and G. Peñafort-Sanchis. We extend to this general framework their main result, which states that the length of $M(g)$ equals $\mu_I(X,f)$ if and only if $M_\rel (G)$ is a Cohen-Macaulay module, and that, if each of these cases hold, the generalised Mond conjecture holds for the mapping $(X,f)$.

\section{Mappings on \textsc{icis}}

Singularities of smooth mappings between manifolds is a classical subject in Singularity Theory. The infinitesimal methods were developed by Thom and Mather in the late sixties and by this reason it is known as Thom-Mather theory. We refer to the book \cite{juanjo} for a modern presentation of the theory, which also includes the extension to holomorphic germs between complex manifolds.

In \cite{Mond-Montaldi} Mond and Montaldi extended the Thom-Mather theory 
of singularities of mappings $f\colon (X,S)\to(\C^p,0)$ defined on an \textsc{icis} $(X,S)$. The crucial point here is that they consider deformations not only of the mapping $f$, but also of the \textsc{icis} $(X,S)$. Here we summarise some of the basic definitions and properties, in order to make this paper more self-contained. For a more detailed account we refer to the original paper by Mond and Montaldi \cite{Mond-Montaldi}.

Along this section we work with holomorphic map germs $f\colon (X,S)\to(\C^p,0)$, where $(X,S)$ is an  \textsc{icis} of dimension $n$. It is usual to denote such a map germs by a pair $(X,f)$, although sometimes we may omit the base set of the germ if it does not provide relevant information or it is clear from the context. The two first definitions declare the type of equivalences and deformations we are dealing with in this theory.

\begin{definition}
Two holomorphic map germs $f,g:(X,S)\rightarrow (\C^p,0)$ are called $\A$\textit{-equivalent} if we have a commutative diagram
$$\begin{tikzcd}
 (X,S) \arrow[r, "f" ]\arrow[d,"\phi"]& (\C^p,0)\arrow[d, " \psi"] \\
 (X,S) \arrow[r, "g" ]& (\C^p,0)
\end{tikzcd}$$
where the columns are biholomorphisms.
\end{definition}

\begin{definition}\label{unfolding}
An \textit{unfolding of the pair }$\left(X,f\right)$ is a map germ $F\colon(\mathcal{X},S')\rightarrow \left(\C^p\times \C^r,0\right)$ together with a flat projection $\pi\colon(\mathcal{X},S')\rightarrow (\C^r,0)$ and an isomorphism $j\colon(X,S)\to\big(\pi^{-1}(0),S'\big)$  such that the following diagram commutes
$$ \begin{tikzcd}[column sep=tiny]
&(X,S) \arrow[dr, "f\times\left\{0\right\}" ] \arrow[dl,  "j" ' ]&  \\
(\pi^{-1}(0),S') \arrow[d,hook] & &\big(\C^p\times\left\{0\right\}, 0\big) \arrow[d,hook]\\
(\mathcal{X},S')\arrow[rr, "F"] \arrow[dr,"\pi" '] && (\C^p\times \C^r,0) \arrow[dl,"\pi_2"]\\
&(\C^r,0)&
\end{tikzcd} ,$$
where $\pi_2:\C^p\times \C^r\rightarrow \C^r$ is the Cartesian projection. \end{definition}

In Definition \ref{unfolding}, $\C^r$ is called the \textit{parameter space of the unfolding}. It is common to denote the unfolding by $(\mathcal{X},\pi,F,j)$. For each parameter $u\in\C^r$ in a neighbourhood of the origin, we have a mapping $f_u\colon X_u\rightarrow \C^p$, where $X_u:= \pi^{-1}(u)$, denoted also by $(X_u,f_u)$.

%Given an unfolding $(\mathcal{X},\pi,F,j)$ of $(X,f)$, the map $f_t:X_t\rightarrow \C^p$ induced from $F$ on $X_t\coloneqq \pi^{-1}(t)$ is called the \textit{perturbation of }$(X,f)$\textit{ induced by the unfolding}, and is abbreviated to the pair $(X_t,f_t)$.

\begin{definition}
Two unfoldings $(\mathcal{X},\pi,F,j)$ and $(\mathcal{X}',\pi',F',j')$ over $\C^r$ are \textit{isomorphic} if the following diagram commutes:
$$
\begin{tikzcd}[column sep=0.4cm]
 & (\mathcal{X},j(S)) \arrow[dd," \Phi"'] \arrow[rr, "F"]\arrow[dr, "\pi"]& & (\C^p\times \C^r,0)\arrow[dd," \Psi"] \arrow[dl, "\pi_2"']\\
(X,S) \arrow[ur, "j"] \arrow[dr, "j'"'] & & (\C^r,0) & \\
 & (\mathcal{X}',j'(S)) \arrow[rr, "F'"] \arrow[ur, "\pi'"]& & (\C^p\times \C^r,0)\arrow[ul, "\pi_2"']
\end{tikzcd},
$$
where $\Phi$ and $ \Psi$ are biholomorphisms and also $\Psi$ is an unfolding of the identity over $\C^d$.

If $(\mathcal{X},\pi,F,j)$ is an unfolding of $(X,f)$ over $(\C^r,0)$, a germ $\rho: (\C^{s},0) \rightarrow (\C^r,0)$ induces and unfolding $(\mathcal{X}_\rho,\pi_\rho,F_\rho,j_\rho)$ of $(X,f)$ by a \textit{base change} or, in other words, by the fibre product of $F$ and $\text{id}_{\C^p}\times \rho$:
$$\begin{tikzcd}
\mathcal{X}_\rho:= \mathcal{X}\times_{\C^p\times \C^s}\left(\C^p\times \C^s\right)\arrow[r,"F_\rho"]\arrow[d]&\C^p\times \C^s\arrow[d,"\text{id}_{\C^p}\times \rho"]\\
\mathcal{X}\arrow[r,"F"]&\C^p\times \C^r
\end{tikzcd},$$
where we omit the points of the germs for simplicity. 

The unfolding $(\mathcal{X},\pi,F,j)$ is \textit{versal} if every other unfolding, for example $(\mathcal{X}',\pi',F',j')$, is isomorphic to an unfolding induced from the former by a base change, $(\mathcal{X}_\rho, \pi_\rho, F_\rho, j_\rho)$. A versal unfolding is called \textit{miniversal} if it has a parameter space with minimal dimension.
\end{definition}

\begin{definition} 
A germ $(X,f)$ is \emph{stable} if any unfolding is \emph{trivial}, that is, isomorphic to the constant unfolding  $(X\times\C^r,\pi_2,f\times\textnormal{id}_{\C^r},i)$.

We say that $(X,f)$ has \emph{isolated instability} if there exists a representative $f\colon X\to\C^p$ such that the restriction $f\colon X\setminus f^{-1}(0)\to \C^p\setminus\{0\}$ has only stable singularities.

A \emph{stabilisation} of $(X,f)$ is a 1-parameter unfolding $(\mathcal{X},\pi,F,j)$ with the property that for any small enough $s\in\C\setminus\{0\}$, $(X_s,f_s)$ has only stable singularities. Such a mapping $(X_s,f_s)$, with $s\ne0$, is called a \emph{stable perturbation} of $(X,f)$. 
\end{definition}

A crucial fact is that any germ $(X,f)$ with isolated instability admits a stabilisation, provided that $(n,p)$ are nice dimensions in the sense of Mather or $f$ has only kernel rank one singularities (that is, $f$ admits an extension whose differential has kernel rank $\le 1$ everywhere). A proof in the case $X=\C^n$ can be found in \cite{juanjo} and the extension to the case of mappings on \textsc{icis} appears in \cite{roberto}.

Next, we recall the notion of $\A_e$-codimension of a germ $(X,f)$. In order to do this, we introduce the following notation:
\begin{itemize}
\item $\O_p$ is the local ring of holomorphic functions $(\C^p,0)\to\C$,
\item $\O_{X,S}$ is the (semi-)local ring of holomorphic functions $(X,S)\to\C$,
\item $f^*:\O_p\to\O_{X,S}$ is the induced ring morphism $f^*(h)=h\circ f$,
\item $\theta_{\C^p,0}$ is the $\O_p$-module of germs of vector fields on $(\C^p,0)$,

\item $\theta_{X,S}$ is the $\O_{X,S}$-module of germs of vector fields on $(X,S)$,

\item $\theta(f)$ is the module of vector fields along $f$,

\item $\omega f:\theta_{\C^p,0}\rightarrow \theta(f)$ is the mapping $\omega(\eta)=\eta\circ f$, 

\item $tf:\theta_{X,S}\rightarrow \theta(f)$ is mapping $tf(\xi)=d\tilde f\circ\xi$, for some analytic extension $\tilde f$ of $f$.
\end{itemize}

\begin{definition}
The \emph{$\A_e$-codimension} of $(X,f)$ is defined as 
\[
\codimAe(X,f)=\dim_\C\frac{\theta(f)}{{tf(\theta_{X,S})+\omega f(\theta_p)}}+\sum_{x\in S} \tau(X,x),
\]
where $\tau(X,x)$ is the Tjurina number of $(X,x)$. When $\codimAe(X,f)<\infty$, the germ $(X,f)$ is called {$\A$-finite}.
\end{definition}

The versality theorem holds also for mappings on \textsc{icis}, as the reader can find in \cite{Mond-Montaldi}: $(X,f)$ is $\A$-finite if and only if it admits a versal unfolding and in case it is $\A$-finite, then $\codimAe(X,f)$ is equal to the  number of parameters in a miniversal unfolding. As a consequence, $(X,f)$ is stable if and only if $(X,S)$ is smooth and $f$ is stable in the usual sense (see \cite{juanjo}).

Another important issue with $\A$-finiteness is the extension of the Mather-Gaffney geometric criterion for mappings on \textsc{icis}: $(X,f)$ is $\A$-finite if and only if it has isolated instability (see \cite{roberto}).

Finally, we will recall the definition of image Milnor number in the case $p=n+1$. The original definition when $X=\C^n$ is due to Mond (see \cite{vanishingcycles, juanjo}), but it can be adapted quite easily to mappings on \textsc{icis} (see \cite{roberto}). In the case $p\le n$, the analogous invariant is called the discriminant Milnor number, considered for the first time by Damon and Mond in \cite{Damon-Mond} in the case $X=\C^n$ and extended to mappings on \textsc{icis} by Mond and Montaldi in \cite{Mond-Montaldi}. The definition is clearly inspired in the classical Milnor number and is motivated by the following theorem.

We denote by $B_\epsilon$ the closed ball in $\C^{n+1}$ of radius $\epsilon>0$ centered at the origin. We assume $f\colon (X,S)\to(\C^{n+1},0)$ is $\A$-finite and that either $(n,n+1)$ are nice dimensions of Mather or $f$ has only corank one singularities. We take a stabilisation of $(X,f)$ with stable perturbation $(X_s,f_s)$.

\begin{theorem}\label{disent}\cite{vanishingcycles, roberto} For all $\epsilon,\eta$, with $0<\eta\ll\epsilon\ll 1$ and for all $s\in\C$, with $0<|s|<\eta$, $f_s(X_s)\cap B_\epsilon$ has the homotopy type of a bouquet of spheres of dimension $n$. Moreover, the number of such spheres, denoted by $\mu_I(X,f)$, is independent of the choice of the stabilisation, the parameter $s$  and the numbers $\epsilon,\eta$.
\end{theorem}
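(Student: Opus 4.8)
The plan is to split the statement into its two halves, proving first that each disentanglement $f_s(X_s)\cap B_\epsilon$ has the homotopy type of a wedge of $n$-spheres, and then that the number of spheres does not depend on any of the choices. The qualitative half I would reduce to a single connectivity estimate. Since $(X,f)$ is $\A$-finite it has isolated instability, so I fix a representative $f\colon X\to\C^{n+1}$ whose only unstable point maps to $0$; by the conical structure of analytic sets I choose a Milnor radius $\epsilon$ for which every sphere $S_{\epsilon'}$ with $0<\epsilon'\le\epsilon$ meets $Y=\im f$ transversally in the stratified sense, so that $Y\cap B_\epsilon$ is a cone and in particular contractible. For $\eta$ small and $0<|s|<\eta$ the perturbed image $Y_s=f_s(X_s)$ stays transverse to $S_\epsilon$ inside $B_\epsilon$, so $Y_s\cap B_\epsilon$ is a well-defined compact object. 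It is the trace on a ball of a closed analytic subset of a Stein open set, of pure complex dimension $n$, so by the Lefschetz-type theorem for Stein spaces it has the homotopy type of a finite CW complex of real dimension $\le n$. Because a CW complex of dimension $\le n$ that is $(n-1)$-connected is a wedge of $n$-spheres, everything reduces to showing that $Y_s\cap B_\epsilon$ is $(n-1)$-connected.

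For the connectivity I would run a bootstrap through the multiple point spaces. Since a stable germ on an \icis forces its source to be smooth, any stabilisation necessarily smooths the \icis, so $X_s$ is its Milnor fibre and is itself $(n-1)$-connected by Hamm's theorem. I then introduce the multiple point spaces $D^k(f_s)$, built from the $k$-fold fibre products of $X_s$ over $f_s$ (of dimension $n+1-k$), and show by descending induction on $k$, using the local normal forms of stable germs in these dimensions together with the connectivity of $X_s$, that each $D^k(f_s)$ is $(n-k)$-connected. These bounds dovetail exactly: the image-computing spectral sequence has $E_1$-page assembled from the alternating homology of the $D^k(f_s)$ and converges to the reduced homology of $Y_s$, and the estimate $\tilde H_q^{\mathrm{alt}}(D^{p+1})=0$ for $q\le n-p-1$ kills every entry with $p+q<n$, forcing $\tilde H_j(Y_s\cap B_\epsilon)=0$ for $j<n$. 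A parallel van Kampen argument on the same stratified multiple-point structure gives simple connectivity when $n\ge2$, and Hurewicz then upgrades the homology vanishing to $(n-1)$-connectivity.

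For the invariance half I would argue by local triviality. Independence of $\epsilon$ and $\eta$ is already built into the conical/Milnor transversality fixed above, since any two admissible choices give homeomorphic pairs. For independence of $s$, I stratify the total image $\mathcal Y=\im F\subset\C^{n+1}\times\C$ compatibly with the projection $\pi$ and apply Thom's first isotopy lemma to conclude that $\pi$ restricts to a locally trivial topological fibration over the punctured disc $0<|s|<\eta$, whence all fibres $Y_s\cap B_\epsilon$ are homeomorphic. Finally, independence of the stabilisation follows because any two stabilisations are pulled back from paths into the base of a versal unfolding of $(X,f)$, whose relevant stratum is connected; joining two such paths by an arc and invoking the same isotopy argument shows that the resulting disentanglements are homeomorphic, so the count of spheres is a genuine invariant $\mu_I(X,f)$.

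The step I expect to be the main obstacle is the connectivity bootstrap in the \icis setting, namely the inductive control of $D^k(f_s)$ together with the convergence of the image-computing spectral sequence. Over a smooth source the multiple point spaces sit inside the smooth power $X_s^k$, but here they live inside fibre products of a singular Milnor fibre, so the normal-form analysis of stable germs and the resulting connectivity bounds must be re-established in this more singular environment, and the alternating-homology machinery and its convergence verified there; this is the technical heart of the argument.
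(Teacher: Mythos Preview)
The paper does not prove this theorem at all: it is quoted from \cite{vanishingcycles} and \cite{roberto}, and the only comment on the argument is the sentence ``The proof of Theorem~\ref{disent} is based on arguments by L\^e \cite{Le} and by Siersma \cite{Siersma}.'' So there is no in-house proof to compare against, only an indication of method.

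That method is genuinely different from yours. The L\^e--Siersma route works entirely on the target side: one takes a reduced equation $G$ of the image of the stabilisation, views $Y=g^{-1}(0)$ as a hypersurface with $1$-dimensional (non-isolated) singular locus, and applies Siersma's theory of such functions. A generic perturbation $g_s$ has isolated critical points off the image $Y_s$, and the bouquet statement together with the count of spheres drops out of the comparison between the Milnor fibre of $g$ and the nearby zero set $g_s^{-1}(0)$; this is exactly what is recorded as Theorem~\ref{Siersma}. No multiple point spaces, no image-computing spectral sequence, and no connectivity bootstrap on the source side are needed. Your approach via the ICSS and the alternating homology of $D^k(f_s)$ is the Goryunov--Mond/Houston line; it is correct and has the advantage of giving finer information about which strata contribute which spheres, but it is considerably heavier than what the paper invokes.

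One remark on your self-diagnosed ``main obstacle'': it is not there. You already observed that stability forces the source to be smooth, so for $s\ne0$ the space $X_s$ is the smooth Milnor fibre of the \textsc{icis}. Hence $D^k(f_s)\subset X_s^k$ sits inside a \emph{smooth} ambient space and the usual multiple-point/ICSS machinery applies verbatim; there is no ``singular Milnor fibre'' to worry about. The genuine work in your route is the $(n-k)$-connectivity of $D^k(f_s)$, which in the literature is obtained (in the nice dimensions or in corank one) from Marar--Mond type descriptions of the multiple point spaces as \textsc{icis} together with the L\^e--Hamm connectivity of their Milnor fibres, not from an ad hoc induction.
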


\begin{definition} With the notation of Theorem \ref{disent}, $f_s(X_s)\cap B_\epsilon$ is called the \emph{disentanglement} and $\mu_I(X,f)$ is called the \emph{image Milnor number} of $(X,f)$.
\end{definition}

The proof of Theorem \ref{disent} is based on arguments by Lê \cite{Le} and by Siersma \cite{Siersma}. In fact, the original formulation in \cite{Siersma} gives a recipe of how to compute $\mu_I(X,f)$ in a more algebraic way:

\begin{theorem}\cite{Siersma}\label{Siersma} With the notation of Theorem \ref{disent}, let $G\colon(\C^{n+1}\times\C,0)\to(\C,0)$ be a reduced equation of the image of the stabilistation. Then,
\[
\mu_I(X,f)=\sum_{y\in B_\epsilon \setminus f_s(X_s)} \mu (g_s; y),  
\]
where $g_s(y)=G(y,s)$ and $ \mu (g_s; y)$ is the Milnor number of $g_s$ at $y$.
\end{theorem}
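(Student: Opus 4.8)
The plan is to turn the statement into a homology computation for the disentanglement $Y_s := f_s(X_s)\cap B_\epsilon = g_s^{-1}(0)\cap B_\epsilon$, and to evaluate that homology by Morse theory applied to the holomorphic function $g_s\colon B_\epsilon\to\C$. The starting observation is that, by Theorem~\ref{disent}, $Y_s$ has the homotopy type of a bouquet of $\mu_I(X,f)$ spheres of dimension $n$, so its only nontrivial reduced homology is $\tilde H_n(Y_s)\cong\Z^{\mu_I(X,f)}$. Since $B_\epsilon$ is contractible, the long exact sequence of the pair gives $\tilde H_k(Y_s)\cong H_{k+1}(B_\epsilon,Y_s)$ for all $k$; hence it suffices to show that $H_\ast(B_\epsilon,Y_s)$ is free, concentrated in degree $n+1$, of rank $\sum_{y}\mu(g_s;y)$.

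First I would fix a good representative in the sense of Milnor--L\^e: for $0<\eta\ll\epsilon\ll1$ the function $g_s$ has no critical points on $\partial B_\epsilon$ with value in a disc $D_\eta\subset\C$, and $0$ is the only critical value of $g_s$ inside a smaller disc $D_\rho$ (with $0<\rho\ll\eta$). The on-image critical locus of $g_s$ (the singular locus of the stable image $Y_s$) lies in the fibre over $0$, whereas off the image the critical points of $g_s$ are exactly the points $y\in B_\epsilon\setminus f_s(X_s)$ appearing in the sum, which I would first argue are finite in number and isolated singularities of $g_s$. Granting this, the tube $g_s^{-1}(D_\rho)\cap B_\epsilon$ deformation retracts onto its central fibre $Y_s$ --- the standard retraction of a Milnor tube onto the special fibre, which holds even when that fibre is singular --- so $H_\ast(B_\epsilon,Y_s)\cong H_\ast\big(B_\epsilon,\,g_s^{-1}(D_\rho)\cap B_\epsilon\big)$.

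The heart of the argument is then a Picard--Lefschetz/Morse computation. Enlarging the disc from $D_\rho$ to a disc $D_R$ containing $g_s(B_\epsilon)$ (so that $g_s^{-1}(D_R)\cap B_\epsilon=B_\epsilon$), one crosses exactly the off-image critical values $c_1,\dots,c_k\neq0$. Each off-image critical point $y$ is an isolated hypersurface singularity of a function of $n+1$ variables, whose local Milnor fibre is a bouquet of $\mu(g_s;y)$ spheres of dimension $n$; crossing the corresponding critical value therefore attaches $\mu(g_s;y)$ cells of dimension $n+1$ and alters the homology only in that degree. Ordering the critical values along disjoint paths so that the local contributions are independent, and applying excision, I obtain $H_{n+1}(B_\epsilon,Y_s)\cong\Z^{\sum_y\mu(g_s;y)}$ with all other relative homology vanishing. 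Comparing with $\tilde H_n(Y_s)\cong\Z^{\mu_I(X,f)}$ through the isomorphism above yields $\mu_I(X,f)=\sum_{y}\mu(g_s;y)$.

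The main obstacle I expect is the geometric input that off the image $g_s$ has only finitely many, isolated, critical points, so that each $\mu(g_s;y)$ is finite and the Morse argument makes sense. This should follow from $\A$-finiteness together with the stability of $f_s$: the reduced equation of a stable image in these dimensions is singular only along the (on-image) multiple-point locus, and a one-parameter perturbation can push only finitely many isolated critical points off the zero level. Making this precise, together with the choice of a representative for which no off-image critical point escapes to $\partial B_\epsilon$ and for which the vanishing cycles attached at distinct critical values are independent, is where the care is needed; the remaining ingredients are the standard Milnor-tube retraction and the cell-attachment bookkeeping.
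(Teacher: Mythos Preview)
The paper does not supply its own proof of this statement---it is quoted from \cite{Siersma}, with only the remark that the arguments of L\^e and Siersma also underlie Theorem~\ref{disent}. Your outline \emph{is} Siersma's argument (long exact sequence of the pair $(B_\epsilon,Y_s)$, L\^e's retraction of the Milnor tube onto its singular central fibre, then Picard--Lefschetz cell-attachment as the disc is enlarged across the nonzero critical values of $g_s$), so the approaches coincide and your sketch is correct, including your identification of the one genuine technical point (finiteness of the off-image critical set of $g_s$).
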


\begin{remark} In order to compute the image Milnor number $\mu_I(X,f)$, sometimes is more convenient to consider a stable unfolding (i.e., an unfolding which is stable as a germ) instead of a stabilisation. In such a case, the bifurcation set $\mathcal B$ is the set of parameters $u\in\C^r$ in a neighbourhood of the origin, such that $f_u\colon X_u\to \C^{n+1}$ has only stable singularities. When $(n,n+1)$ are nice dimensions or when $f$ has only corank one singularities, $\mathcal B$ is a proper closed analytic subset germ in a neighbourhood of $0$ in $\C^r$ (see \cite{juanjo} or \cite{roberto}). A stabilisation can be constructed easily by taking a line $L\subset\C^r$ with the property that $L\cap \mathcal B=\{0\}$. If $u\notin\mathcal B$, then $f_u(X_u)\cap B_\epsilon$ has the homotopy type of a bouquet of $n$-spheres and the number of such spheres is $\mu_I(X,f)$. Analogously, if $G\colon(\C^{n+1}\times\C^r,0)\to(\C,0)$ is a reduced equation of the image of the unfolding, then
\[
\mu_I(X,f)=\sum_{y\in B_\epsilon \setminus f_u(X_u)} \mu (g_u; y),  
\]
where $g_u(y)=G(y,u)$ and $ \mu (g_u; y)$ is the Milnor number of $g_u$ at $y$.
\end{remark}

\section{The generalised version of the Jacobian module $M(g)$}\label{section3}

Let $f:(X,S)\rightarrow (\C^{n+1},0)$ be an $\A$-finite mapping defined on an \icis $(X,S)\subset (\C^{n+k},S)$ of dimension $n$. As $f$ is finite, its image $(Y,0)$ is a hypersurface of $(\C^{n+1},0)$, and hence it can be described by a reduced equation $g\in \O_{n+1}$. Let $h:(\C^{n+k},S)\rightarrow (\C^k,0)$ be a mapping so that $(X,S)=h^{-1}(0)$. Consider an analytic extension $\tilde{f}:(\C^{n+k},S)\rightarrow (\C^{n+1},0)$ of $f$, and write $\hat{f}=(\Tilde{f},h):(\C^{n+k},S)\rightarrow (\C^{n+1+k},0)$. It is therefore clear that the restriction of $\hat{f}$ to $(X,S)$ is precisely the mapping $(f,0)$. Hence, $\hat{f}$ is a finite mapping, since $\hat{f}^{-1}(0)=f^{-1}(0)=S$. Moreover, the diagram
\begin{center}
    \begin{tikzcd}
	& {(\C^k,0)} \\
	{(\C^{n+k},S)} & {(\C^{n+1+k},0)} \\
	{(X,S)} & {(\C^{n+1},0)}
	\arrow["f", from=3-1, to=3-2]
	\arrow["i", hook, from=3-1, to=2-1]
	\arrow["\hat{f}", from=2-1, to=2-2]
	\arrow["j"', hook, from=3-2, to=2-2]
	\arrow["h", two heads, from=2-1, to=1-2]
	\arrow[two heads, from=2-2, to=1-2]
\end{tikzcd}
\end{center}
commutes, where $i$ is the inclusion and $j$ is the natural immersion. In particular, $\hat{f}$ is an unfolding of $f$ deforming both the mapping and the domain. After taking representatives, the induced deformations of $f$ are the mappings $\hat{f}_t:X_t\subset \C^{n+k}\rightarrow \C^{n+1}$ defined as $\hat{f}_t(x)=\tilde{f}(x,t)$, where $X_t=h^{-1}(t)$ for $t\in \C^k$ small enough. 

The key idea is that $\hat{f}$ is the simplest unfolding of $f$ with smooth source. Hence, the definition of the module for the mapping $f$ will be performed through a specialisation of the one from $\hat{f}$. 

Since $\hat{f}$ is a finite mapping, its image $(\hat{Y},0)$ is a hypersurface of $(\C^{n+1+k},0)$. Furthermore, the restriction of $\hat{f}$ to its image $(\C^{n+k},S)\rightarrow (\hat{Y},0)$ is the normalisation mapping of $(\hat{Y},0)$, and hence it induces a monomorphism of rings $\O_{\hat{Y},0}\hookrightarrow \O_{n+k}$ that lets us consider $\O_{\hat{Y},0}$ to be a subring of $\O_{n+k}$. In this case, the diagram
\begin{equation*}
\begin{tikzcd}
\O_{n+1+k} \arrow[r, "\hat{f}^*"] \arrow[dr,"\pi",two heads]
&  \O_{n+k}  \\
& \O_{\hat{Y},0}\arrow[u,hook]
\end{tikzcd}
\end{equation*}
commutes, where $\pi$ is the epimorphism associated to the natural inclusion of $(\hat{Y},0)$ in $(\C^{n+1+k},0)$. We consider both $\O_{\hat{Y},0}$ and $\O_{n+k}$ to be $\O_{n+1+k}$-modules via the corresponding morphisms $\pi$ and $\hat{f}^*$, respectively. 

Let us consider $\hat{g} \in \O_{n+1+k}$ to be a reduced equation of $(\hat{Y},0)$ in such a way that $\hat{g} \circ j =g$. The following result from R. Piene \cite{Piene} relates the conductor ideal of $\hat{f}$, given by 
\begin{equation*}
    C(\hat{f})= \{ h \in \O_{\hat{Y},0} : h\cdot \O_{n+k} \subset \O_{\hat{Y},0}\},
\end{equation*}
with the partial derivatives of $\hat{g}$ and with the minors of the Jacobian matrix $d\hat{f}$.
\begin{theorem}[\cite{Piene}]
    There exists a unique $\lambda \in \O_{n+k}$ such that, for every $l\in \{1, \ldots, n+1+k\}$, 
    \begin{equation*}
        \partial_l \hat{g}\circ \hat{f}=(-1)^l\cdot \lambda \cdot\det (d\hat{f}_1, \ldots, d\hat{f}_{l-1}, d\hat{f}_{l+1}, \ldots, d\hat{f}_{n+1+k}),
    \end{equation*}
    where $\partial_l \hat{g}$ denotes the partial derivative of $\hat{g}$ with respect to the $l$-th variable. Furthermore, the ideal $C(\hat{f})$ is principal, and generated by the element $\lambda$. 
\end{theorem}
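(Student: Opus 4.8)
The statement is essentially a classical result of Piene on the conductor of a finite map between a smooth curve/variety and a hypersurface, here applied to the normalization $\hat f \colon (\C^{n+k},S) \to (\hat Y,0)$. Let me think about how to prove it.

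The setup: $\hat f = (\tilde f, h)$ is finite, $\hat Y = \im \hat f$ is a hypersurface with reduced equation $\hat g \in \O_{n+1+k}$, and $\hat f$ restricted to $\hat Y$ is the normalization. So $\O_{\hat Y,0} \hookrightarrow \O_{n+k}$ is a finite extension of rings with the same fraction field.

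We want: there's a unique $\lambda \in \O_{n+k}$ with $\partial_l \hat g \circ \hat f = (-1)^l \lambda \det(d\hat f_1, \ldots, \widehat{d\hat f_l}, \ldots, d\hat f_{n+1+k})$ for all $l$, and $C(\hat f) = (\lambda)$.

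**Approach.**

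First, the existence and the formula. Since $\hat g \circ \hat f \equiv 0$ (as $\hat f$ maps into $\hat Y = V(\hat g)$), differentiating with respect to each of the $n+k$ source coordinates $x_1, \ldots, x_{n+k}$ gives
$$\sum_{l=1}^{n+1+k} (\partial_l \hat g \circ \hat f) \cdot \partial_{x_m} \hat f_l = 0, \quad m = 1, \ldots, n+k.$$
This is a system of $n+k$ linear equations in the $n+1+k$ unknowns $(\partial_l \hat g \circ \hat f)_{l}$. The coefficient matrix is the $(n+k) \times (n+1+k)$ Jacobian matrix $d\hat f$. Its $(n+k)$-minors are precisely $M_l := \det(d\hat f_1, \ldots, \widehat{d\hat f_l}, \ldots, d\hat f_{n+1+k})$ (dropping the $l$-th column). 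By Cramer-type reasoning (the adjugate / the fact that the vector of signed maximal minors spans the kernel of a full-rank matrix), the vector $(\partial_l \hat g \circ \hat f)_l$ is, at least generically (where $d\hat f$ has rank $n+k$), proportional to $((-1)^l M_l)_l$. So there is a meromorphic $\lambda$ — a priori in the fraction field — with $\partial_l \hat g \circ \hat f = (-1)^l \lambda M_l$.

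Second, I need to show $\lambda \in \O_{n+k}$ (holomorphic), not just meromorphic. This is where one needs that $\hat f$ is finite and $\hat Y$ is reduced: since $\hat g$ is a reduced equation, at a generic point of $\hat Y$ the hypersurface is smooth and $\hat f$ is an immersion there, so some $M_l$ is a unit and the corresponding equation $\partial_l \hat g \circ \hat f = (-1)^l \lambda M_l$ exhibits $\lambda$ as holomorphic on a dense open set; then one argues $\lambda$ extends holomorphically across the (complex-codimension-$\geq 1$) bad locus using that $\O_{n+k}$ is normal (it's $\C\{x\}$, a regular, hence normal, ring) — a meromorphic function on a normal variety that is holomorphic off a codimension-$\geq 1$ set... wait, that's not enough; normality gives extension across codimension $\geq 2$. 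So I should instead argue more carefully: the Jacobian ideal of $\hat g$ pulled back, together with the relations, and use that $\lambda M_l \in \O_{n+k}$ for all $l$ while the $M_l$ generate an ideal whose zero locus has codimension... Actually the cleanest route is the one Piene uses: identify $\lambda$ directly with a generator of the conductor.

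Third, and this is the heart, identify $C(\hat f) = (\lambda)$. The conductor $C(\hat f) = \Ann_{\O_{\hat Y,0}}(\O_{n+k}/\O_{\hat Y,0})$ is, by general theory for a hypersurface $\hat Y$ with normalization $\O_{n+k}$, computable via duality: $\O_{\hat Y,0}$ being a hypersurface is Gorenstein, and the conductor of the normalization of a Gorenstein curve — or more generally the relation between the conductor and the Jacobian — is governed by the Dedekind different. Concretely, the module $\Hom_{\O_{\hat Y,0}}(\O_{n+k}, \O_{\hat Y,0})$ is identified inside the fraction field with $\mathfrak{c}^{-1}$ where $\mathfrak c = C(\hat f)$ is viewed as an ideal of $\O_{n+k}$ as well; and the Kähler/Jacobian formula says this fractional ideal is generated by $1/(\text{Jacobian})$. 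Pulling $\hat g$'s partials back and using the chain rule relation from step one, the common value $\lambda$ (up to the unit ambiguity hidden in choice of coordinates) generates exactly $C(\hat f)$. I expect the precise bookkeeping — matching the sign, checking $\lambda$ is well-defined independent of which $l$, and the uniqueness of $\lambda$ — to follow from the fact that the $M_l$ have no common factor (again because $\hat Y$ is reduced so $\hat f$ is generically immersive), which forces $\lambda$ to be the "gcd-type" common factor and pins it down uniquely up to nothing (it's a genuine element, not just up to units, because the formula fixes it).

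**Main obstacle.** The genuinely delicate point is the identification $C(\hat f) = (\lambda)$, i.e. going from "$\lambda$ is the proportionality constant in the Jacobian relations" to "$\lambda$ generates the conductor". This is Piene's theorem proper and rests on the Gorenstein/duality structure of the hypersurface $\hat Y$ together with a local computation of the Dedekind different of the normalization. The chain-rule manipulation in step one is routine; holomorphy of $\lambda$ (step two) I'd fold into step three by first establishing $C(\hat f) = (\lambda)$ as fractional ideals, which simultaneously shows $\lambda \in \O_{n+k}$ and that it generates the conductor. For the write-up I would simply cite \cite{Piene} for this core identification, since the theorem is quoted in the paper as a known input, and only recall the chain-rule computation that produces the formula and makes the statement usable downstream.
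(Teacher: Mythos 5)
The paper offers no proof of this statement: it is imported verbatim from \cite{Piene} as a known input, so your final decision to cite Piene for the core identification $C(\hat{f})=(\lambda)$ is exactly what the paper does. Your preparatory sketch is sound — the chain-rule relation $\sum_l(\partial_l\hat{g}\circ\hat{f})\,\partial_{x_m}\hat{f}_l=0$ does force proportionality to the signed maximal minors wherever $d\hat{f}$ has full rank, and you correctly flag that holomorphy of $\lambda$ cannot be settled by normality alone (the non-immersive locus may have codimension one) but comes for free once the duality/Gorenstein identification of the conductor is invoked.
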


Let us denote by $J(\hat{g})$ to the Jacobian ideal of $\hat{g}$, which is generated by the partial derivatives $\partial_l \hat{g}$. This result therefore shows that $J(\hat{g})\cdot \O_{n+k}\subset C(\hat{f})$, where $J(\hat{g})\cdot \O_{n+k} = \hat{f}^*(J(\hat{g}))$. 

On the other hand, since $\hat{f}$ is a finite mapping with degree 1 onto its image, an application of Theorem 3.4 of \cite{Mond-Pellikaan} of D. Mond and R. Pellikaan shows that $\F_1(\hat{f}) \cdot \O_{n+k} = C(\hat{f})$, where $\F_1(\hat{f})$ denotes the first Fitting ideal of $\O_{n+k}$ as an $\O_{n+1+k}$-module via $\hat{f}^*$. 

Let us denote by $(y,z)$ to the coordinates in $(\C^{n+1+k},0)$, with $y\in \C^{n+1}$ and $z\in \C^k$, and consider the Jacobian ideal $J_y(\hat{g} ) = \langle \partial \hat{g}/\partial y_1, \ldots, \partial \hat{g}/\partial y_{n+1}\rangle$ generated by the derivatives with respect to the variables $y=(y_1, \ldots, y_{n+1})$. 
\begin{definition}\label{def:module} The restriction of $\hat{f}^*$ to $\F_1(\hat{f})$ induces an epimorphism of $\O_{n+1+k}$-modules 
\begin{equation*}
    \dfrac{\F_1(\hat{f})}{J_y(\hat{g})}\rightarrow \dfrac{C(\hat{f})}{J(\hat{g})\cdot \O_{n+k}}.
\end{equation*}
We define $N(\hat{g})$ to be the $\O_{n+1+k}$-module given by the kernel of this morphism, and define 
\begin{equation*}
    M(g)=N(\hat{g})\otimes\dfrac{\O_{n+1+k}}{\m_k\cdot \O_{n+1+k}},
\end{equation*}
where $\m_k=(z_1, \ldots, z_k)$ is generated by the parameters of the unfolding $\hat{f}$ of $f$. Note that $M(g)$ has a natural $\O_{n+1}$-module structure inherited from the tensor product. 
\end{definition}
Hence, $M(g)$ is defined by taking into account that $\hat{f}$ is an unfolding of $f$, and following the spirit of \cite{BobadillaNunoPenafort} in the case of mappings with smooth source. Furthermore, it is relevant to notice that this module $M(g)$ coincides with the given in the smooth case just by taking $\hat{f}=f$ and $k=0$.

\begin{remark} Since $\hat{f}$ is defined in a smooth source, it lies naturally in the context of \cite{BobadillaNunoPenafort}. However, the module $N(\hat{g})$ that we have defined is not exactly the same as the module that is defined in \cite{BobadillaNunoPenafort}. Indeed, the source of the morphism that defines $N(\hat{g})$ has the Jacobian ideal $J_y(\hat{g})$ where only partial derivatives with respect to the parameters $y_1, \ldots, y_{n+1}$ are taken into consideration, while the module in the target does have all the partial derivatives. Although this may seem to be somewhat whimsical, this will be shown to be exactly what is needed for the module to specialise properly. This is due to the fact that, in general, $J(\hat{g})\cdot \O_{n+k}\neq J_y(\hat{g})\cdot \O_{n+k}$, in contrast with what happens in the smooth case. This important fact is what infuences this definition for $M(g)$, and what makes that another possible definition may not specialise properly. 
\end{remark}
%\begin{example} \textbf{Podemos añadir un ejemplo donde se vea la $\neq$ que comentamos en el remark}.  \end{example}
\begin{remark} The module $N(\hat{g})$ is determined by the short exact sequence
\begin{equation*}
    0\rightarrow N(\hat{g}) \rightarrow \dfrac{\F_1(\hat{f})}{J_y(\hat{g})}\rightarrow \dfrac{C(\hat{f})}{J(\hat{g})\cdot \O_{n+k}}\rightarrow 0.
\end{equation*}
\end{remark}
\begin{proposition} The following formula holds:
\begin{equation*}
    N(\hat{g})=\dfrac{(\hat{f}^*)^{-1} (J(\hat{g})\cdot \O_{n+k})}{J_y(\hat{g})}. 
\end{equation*}
\end{proposition}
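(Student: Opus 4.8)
The plan is to read off $N(\hat{g})$ directly from the defining short exact sequence. The morphism $\F_1(\hat{f})/J_y(\hat{g})\to C(\hat{f})/(J(\hat{g})\cdot\O_{n+k})$ is the one induced by $\hat{f}^*$, so it sends a class $[a]$, with $a\in\F_1(\hat{f})$, to $[\hat{f}^*(a)]$; hence its kernel is the set of classes $[a]$ with $\hat{f}^*(a)\in J(\hat{g})\cdot\O_{n+k}$, that is,
$$N(\hat{g})=\frac{\F_1(\hat{f})\cap(\hat{f}^*)^{-1}\big(J(\hat{g})\cdot\O_{n+k}\big)}{J_y(\hat{g})}.$$
Here $J_y(\hat{g})$ sits inside the numerator because $J_y(\hat{g})\subseteq J(\hat{g})$ forces $\hat{f}^*(J_y(\hat{g}))\subseteq J(\hat{g})\cdot\O_{n+k}$, so the quotient appearing on the right-hand side of the Proposition also makes sense. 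Thus the whole content of the statement is the single inclusion $(\hat{f}^*)^{-1}(J(\hat{g})\cdot\O_{n+k})\subseteq\F_1(\hat{f})$, which I would prove next.

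For that inclusion I would first use Piene's theorem: each $\hat{f}^*(\partial_l\hat{g})$ equals $\pm\lambda$ times a maximal minor of $d\hat{f}$, and $\lambda$ generates the conductor $C(\hat{f})$, so $J(\hat{g})\cdot\O_{n+k}\subseteq\lambda\cdot\O_{n+k}=C(\hat{f})$. It therefore suffices to prove $(\hat{f}^*)^{-1}(C(\hat{f}))\subseteq\F_1(\hat{f})$; since the reverse inclusion is automatic from $\hat{f}^*(\F_1(\hat{f}))\subseteq\F_1(\hat{f})\cdot\O_{n+k}=C(\hat{f})$, this amounts to the equality $(\hat{f}^*)^{-1}(C(\hat{f}))=\F_1(\hat{f})$. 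Because $\ker\hat{f}^*=(\hat{g})$ is contained in $\F_1(\hat{f})$ (in the degree-one case $(\hat{g})=\F_0(\hat{f})\subseteq\F_1(\hat{f})$), this equality is in turn equivalent to $\pi(\F_1(\hat{f}))=C(\hat{f})$, where $\pi\colon\O_{n+1+k}\to\O_{\hat{Y},0}$ is the restriction epimorphism: one inclusion is immediate from $\F_1(\hat{f})\cdot\O_{n+k}=C(\hat{f})$, and for the other I would invoke the degree-one part of the Mond--Pellikaan analysis (Theorem 3.4 of \cite{Mond-Pellikaan}), which identifies the first Fitting ideal modulo the equation of the image with the conductor itself, not merely with an ideal that generates it after extension of scalars to $\O_{n+k}$. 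Granting $\pi(\F_1(\hat{f}))=C(\hat{f})$, any $a$ with $\hat{f}^*(a)\in J(\hat{g})\cdot\O_{n+k}\subseteq C(\hat{f})=\pi(\F_1(\hat{f}))$ satisfies $\pi(a)\in\pi(\F_1(\hat{f}))$, hence $a\in\F_1(\hat{f})+(\hat{g})=\F_1(\hat{f})$, which is exactly the inclusion we want.

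The main obstacle is precisely this last point: upgrading the cited identity $\F_1(\hat{f})\cdot\O_{n+k}=C(\hat{f})$ to an equality of ideals already inside $\O_{\hat{Y},0}$ — equivalently, that $\F_1(\hat{f})$ is saturated with respect to the conductor, i.e. contracted from $\O_{n+k}$ along $\hat{f}^*$. If one wants a self-contained argument this forces one to examine a presentation matrix of $\O_{n+k}$ as an $\O_{n+1+k}$-module for a finite map of degree one and verify the claim on the relevant minors; otherwise it is quoted from \cite{Mond-Pellikaan}. Everything else in the proof — the identification of the kernel and the reduction through Piene's theorem — is purely formal.
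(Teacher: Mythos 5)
Your proof is correct and follows essentially the same route as the paper: identify the kernel as $\bigl((\hat{f}^*)^{-1}(J(\hat{g})\cdot\O_{n+k})\cap\F_1(\hat{f})\bigr)/J_y(\hat{g})$ and then use $J(\hat{g})\cdot\O_{n+k}\subset C(\hat{f})$ to absorb the intersection into $\F_1(\hat{f})$. The paper simply asserts the resulting inclusion $(\hat{f}^*)^{-1}(J(\hat{g})\cdot\O_{n+k})\subset\F_1(\hat{f})$, whereas you correctly isolate the point on which it rests, namely the Mond--Pellikaan identification $\pi(\F_1(\hat{f}))=C(\hat{f})$ inside $\O_{\hat{Y},0}$ together with $(\hat{g})\subset\F_1(\hat{f})$.
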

\begin{proof} By definition, it is clear that 
\begin{equation*}
    N(\hat{g})=\dfrac{(\hat{f}^*)^{-1} (J(\hat{g})\cdot \O_{n+k})\cap \F_1(\hat{f})}{J_y(\hat{g})}.
\end{equation*}
Since $J(\hat{g})\cdot \O_{n+k}\subset C(\hat{f})$, then $(\hat{f}^*)^{-1} (J(\hat{g})\cdot \O_{n+k})\subset  \F_1(\hat{f})$, and the formula follows.
\end{proof}

\section{A relative version for the generalised Jacobian module}

In this section, a relative version of the module for unfoldings of $\hat{f}$ is defined. For the sake of simplicity, we consider unfoldings $F:(\C^{n+k+r},S\times 0)\rightarrow (\C^{n+1+k+r},0)$ of the mapping $\hat{f}$ instead of general unfoldings of $f$ with possibly non-smooth source. Let $G\in \O_{n+1+k+r}$ be an equation for the image of $F$ such that $G(y,z,0)=\hat{g}(y,z)$, where $(y,z,u)$ are the coordinates of $(\C^{n+1+k+r},0)$, with $y\in \C^{n+1}, z\in \C^k$ and $u\in \C^r$. We then have that the diagram
\[\begin{tikzcd}
	{(\C^{n+k+r},S\times 0)} & {(\C^{n+1+k+r},0)} \\
	{(\C^{n+k},S)} & {(\C^{n+1+k},0)} & {(\C,0)} \\
	{(X,S)} & {(\C^{n+1},0)}
	\arrow["{\hat{f}}", from=2-1, to=2-2]
	\arrow["f", from=3-1, to=3-2]
	\arrow["i", hook, from=3-1, to=2-1]
	\arrow["j"', hook, from=3-2, to=2-2]
	\arrow["{\hat{i}}", hook, from=2-1, to=1-1]
	\arrow["{\hat{j}}"', hook, from=2-2, to=1-2]
	\arrow["F", from=1-1, to=1-2]
	\arrow["g", from=3-2, to=2-3]
	\arrow["{\hat{g}}", from=2-2, to=2-3]
	\arrow["G", from=1-2, to=2-3]
\end{tikzcd}\]
commutes, where $\hat{i}, \hat{j}$ are the natural immersions. Let us denote by 
\begin{equation*}
    J_y(G ) = \Big\langle \dfrac{\partial G}{\partial y_1}, \ldots, \dfrac{\partial G}{\partial y_{n+1}}\Big\rangle, \,\,
    J_z(G ) = \Big\langle \dfrac{\partial G}{\partial z_1}, \ldots, \dfrac{\partial G}{\partial z_k}\Big\rangle, 
\end{equation*}
and $J_{y,z}(G)=J_y(G)+J_z(G)$. We therefore define $M_\rel (G)$ as the kernel of the module epimorphism 
\begin{equation*}
    \dfrac{\F_1(F)}{J_y (G)}\rightarrow \dfrac{C(F)}{J_{y,z}(G)\cdot \O_{n+k+r}}.
\end{equation*}
Hence, $M_\rel (G)$ fits into the short exact sequence 
\begin{equation*}
    0\rightarrow M_\rel (G) \rightarrow \dfrac{\F_1(F)}{J_y (G)}\rightarrow \dfrac{C(F)}{J_{y,z}(G)\cdot \O_{n+k+r}}\rightarrow 0.
\end{equation*}
Furthermore, it is straightforward to verify that 
\begin{proposition}\label{prop:formulaMrelG} The following formula holds:
\begin{equation*}
    M_\rel (G)=\dfrac{(F^*)^{-1} (J_{y,z}(G)\cdot \O_{n+k+r})}{J_y(G)}.
\end{equation*}
\end{proposition}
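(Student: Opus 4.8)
The plan is to mimic, nearly word for word, the proof of the analogous formula for $N(\hat{g})$ given above, now with $\hat{f}$ replaced by the unfolding $F$, the reduced equation $\hat{g}$ by $G$, and the Jacobian ideal $J(\hat{g})$ by $J_{y,z}(G)$.

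First I would unwind the definition of $M_\rel(G)$ as the kernel of the epimorphism of $\O_{n+1+k+r}$-modules
\[
\frac{\F_1(F)}{J_y(G)}\longrightarrow\frac{C(F)}{J_{y,z}(G)\cdot\O_{n+k+r}}
\]
induced by $F^*$. For this to be meaningful one needs $J_y(G)\subseteq\F_1(F)$, $F^*\big(\F_1(F)\big)\subseteq C(F)$ and $J_{y,z}(G)\cdot\O_{n+k+r}\subseteq C(F)$, and all three follow, exactly as in the absolute case, by applying the theorems of Piene and of Mond--Pellikaan to $F$, which is again finite and generically one-to-one onto a hypersurface image: Piene gives $J(G)\cdot\O_{n+k+r}\subseteq C(F)$ (hence in particular $J_{y,z}(G)\cdot\O_{n+k+r}\subseteq C(F)$), while Mond--Pellikaan gives $\F_1(F)\cdot\O_{n+k+r}=C(F)$ and, more precisely, identifies $\F_1(F)$ with the $F^*$-preimage of the conductor, which also yields $J_y(G)\subseteq\F_1(F)$. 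Reading off the kernel then yields
\[
M_\rel(G)=\frac{(F^*)^{-1}\!\big(J_{y,z}(G)\cdot\O_{n+k+r}\big)\cap\F_1(F)}{J_y(G)}.
\]

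It remains to observe that the intersection with $\F_1(F)$ is superfluous. Since $J_{y,z}(G)\cdot\O_{n+k+r}\subseteq C(F)$, it suffices to check $(F^*)^{-1}(C(F))\subseteq\F_1(F)$; and writing $F^*$ as the composite $\O_{n+1+k+r}\twoheadrightarrow\O_{\hat{Y}_F}\hookrightarrow\O_{n+k+r}$, where $\hat{Y}_F=\im F$, and using that $C(F)$ is an ideal of $\O_{\hat{Y}_F}$ with $\F_1(F)$ its full preimage under $F^*$ (Theorem 3.4 of \cite{Mond-Pellikaan} in the degree-one hypersurface case), we get $(F^*)^{-1}(C(F))=\F_1(F)$. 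Substituting back gives the asserted formula.

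The only step carrying any real content — and thus the one I expect to be the (mild) obstacle — is precisely the identification $(F^*)^{-1}(C(F))=\F_1(F)$: one must use that $\F_1(F)$ is the \emph{full} preimage of the conductor under $F^*$, not merely an ideal whose extension to $\O_{n+k+r}$ recovers $C(F)$, and check that the hypotheses needed for Mond--Pellikaan (finiteness, generic injectivity, hypersurface image) do persist on passing from $\hat{f}$ to the unfolding $F$. Once this is granted, everything else is the same routine bookkeeping with ideals and kernels as in the proof for $N(\hat{g})$.
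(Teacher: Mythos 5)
Your argument is correct and is essentially the paper's own: the paper labels this proposition ``straightforward to verify'' precisely because the proof is word-for-word the one already given for $N(\hat{g})$, namely writing the kernel as $\bigl((F^*)^{-1}(J_{y,z}(G)\cdot\O_{n+k+r})\cap\F_1(F)\bigr)/J_y(G)$ and then discarding the intersection because $J_{y,z}(G)\cdot\O_{n+k+r}\subseteq C(F)$ forces the preimage to land inside $\F_1(F)$. Your added care in noting that this last step needs $\F_1(F)$ to be the \emph{full} $F^*$-preimage of the conductor (Mond--Pellikaan for a finite, generically one-to-one map onto a hypersurface, hypotheses which $F$ inherits from $\hat{f}$) is the correct justification of the point the paper leaves implicit.
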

In contrast with the case of $M(g)$, we have that $J_{y,z}(G)\cdot \O_{n+k+r}=J(G)\cdot \O_{n+k+r}$ in virtue of lemma 4.5 of \cite{BobadillaNunoPenafort}. Hence, they can be interchanged indistinctively in this formula. 

Notice, in addition, that the relative module in \cite{BobadillaNunoPenafort} is denoted by $M_y(G)$ with the intention to highlight that the module is relative with respect to the parameters $y=(y_1, \ldots, y_{n+1})$. Since this interpretation in the case of mappings with non-smooth source is less clear, due to the fact that one has both parameters $y,z$, the authors have decided to consider the notation $M_\rel (G)$. 

In what follows, we show that the given definition of the $\O_{n+1+k+r}$-module $M_\rel (G)$ properly specialises to $M(g)$ seen as an $\O_{n+1}$-module. Before giving a proof of this result, let us comment on the specialisation method that should be considered. 

\begin{remark}\label{remark:specialisation}
It is relevant to notice that the specialisation process that is performed in \cite{BobadillaNunoPenafort} for mappings with smooth source restricts the relative module $M_y(G)=M_\rel (G)$ to be an $\O_r$-module, and then $M_\rel(G)$ is tensored with $\O_r/\m_r$. Hence, one naturally obtains a $\C$-vector space that is isomorphic to $M(g)$ (see Theorem 4.6 of \cite{BobadillaNunoPenafort}). However, this specialisation process ignores the fact that $M(g)$ is an $\O_{n+1}$-module. In order to take this into account, one should perform the specialisation process in a slightly different way. Since $M_\rel (G)$ is an $\O_{n+1+r}$-module, the tensor product 
\begin{equation*}
    M_\rel(G)\otimes \dfrac{\O_{n+1+r}}{\m_r\cdot \O_{n+1+r}}
\end{equation*}
provides naturally an $\O_{n+1}$-module, due to the fact that $\O_{n+1+r}/(\m_r\cdot \O_{n+1+r})\cong \O_{n+1}$, where $\m_r=(u_1, \ldots , u_r)$ is the maximal ideal of $\O_r$ generated by the parameters of the unfolding $F$. In fact, this specialisation process both captures the spirit of forcing the parameters of the unfolding to be equal to 0 and keeps the natural $\O_{n+1}$-module structure of $M(g)$, as it can be easily verified with minor modifications in the proofs of section 4 of \cite{BobadillaNunoPenafort}. Hence, it can be easily checked that the module $M_\rel (G)$ of \cite{BobadillaNunoPenafort} in the smooth case satisfies that 
\begin{equation*}
    M_\rel(G)\otimes \dfrac{\O_{n+1+r}}{\m_r\cdot \O_{n+1+r}} \cong M(g),
\end{equation*}
where $\cong $ denotes isomorphism of $\O_{n+1}$-modules. Although this is not relevant in the smooth case, we have already seen in the definition \ref{def:module} that specialisations should be taken through this method in order to preserve the module structure.
\end{remark}

Taking this into account, we are now able to check that the module $M_\rel (G)$ in this setting properly specialises to $M(g)$:
\begin{theorem}
    If $n\geq 2$, then 
    \begin{equation*}
        M_\rel (G) \otimes \dfrac{\O_{n+1+k+r}}{\m_{k+r}\cdot \O_{n+1+k+r}}\cong M(g)
    \end{equation*}
    as $\O_{n+1}$-modules.
\end{theorem}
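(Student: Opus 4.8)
The plan is to reduce the statement to the corresponding specialisation result for mappings with smooth source, applied to the unfolding $\hat f$ viewed inside the theory of \cite{BobadillaNunoPenafort}. First I would observe that $F$ is an $r$-parameter unfolding of $\hat f$ with smooth source $(\C^{n+k},S)$, so the module $M_{\rel}(G)$ in the sense of \cite{BobadillaNunoPenafort} (there denoted $M_y(G)$) is defined relative only to the first $n+1$ variables $y$, whereas the module of the present paper is relative to $J_y(G)$ but with the conductor side divided by $J_{y,z}(G)\cdot\O_{n+k+r}$. The key point flagged in the remark is that for the \emph{full} unfolding $F$ of the smooth map $\hat f$ one has $J_{y,z}(G)\cdot\O_{n+k+r}=J(G)\cdot\O_{n+k+r}$ by lemma 4.5 of \cite{BobadillaNunoPenafort}, so the $M_{\rel}(G)$ defined here for $F$ agrees \emph{on the nose} with the $M_y(G)$ of \cite{BobadillaNunoPenafort} associated to $G$ as a map with $n+1$ essential variables and $k+r$ unfolding parameters $(z,u)$. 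Thus the theorem becomes: specialising $M_y(G)$ by setting all $k+r$ unfolding parameters to zero yields $M(g)$ — but here $M(g)$ is itself, by Definition \ref{def:module}, the specialisation of $N(\hat g)$ (the module $M_y$ of the $k$-parameter unfolding $\hat f$) by setting the $k$ parameters $z$ to zero.

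Concretely, I would proceed in two stages. Stage one: specialise the $u$-parameters only, i.e. tensor $M_{\rel}(G)$ with $\O_{n+1+k+r}/(\m_r\cdot\O_{n+1+k+r})\cong\O_{n+1+k}$; by the (mild strengthening of the) argument of Theorem 4.6 of \cite{BobadillaNunoPenafort} recorded in Remark \ref{remark:specialisation}, this gives $N(\hat g)$ as an $\O_{n+1+k}$-module, using that $G(y,z,0)=\hat g(y,z)$, that $\F_1(F)\otimes\O_{n+1+k}=\F_1(\hat f)$, that $J_y(G)$ and $J_{y,z}(G)$ restrict to $J_y(\hat g)$ and $J(\hat g)$, and that $C(F)$ restricts to $C(\hat f)$. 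Stage two: further specialise the $z$-parameters, i.e. tensor $N(\hat g)$ with $\O_{n+1+k}/(\m_k\cdot\O_{n+1+k})\cong\O_{n+1}$, which is exactly the definition of $M(g)$. Composing the two tensor products gives $M_{\rel}(G)\otimes\O_{n+1+k+r}/(\m_{k+r}\cdot\O_{n+1+k+r})\cong M(g)$, since $\m_{k+r}=\m_k+\m_r$ and iterated base change along $\C^{n+1+k+r}\to\C^{n+1+k}\to\C^{n+1}$ composes.

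The step I expect to be the real obstacle — and where the hypothesis $n\ge 2$ should enter — is showing that the short exact sequence defining $M_{\rel}(G)$ stays exact after the specialisation, equivalently that $\Tor_1$ of the conductor quotient $C(F)/J_{y,z}(G)\cdot\O_{n+k+r}$ against $\O_{n+1+k+r}/\m_{k+r}$ vanishes; this is the analogue of the flatness/depth estimate used in \cite{BobadillaNunoPenafort} and is precisely where one needs enough depth, which holds once $n\ge 2$ (so that the relevant conductor/Jacobian quotients have the Cohen-Macaulay or depth properties that make the parameters $z,u$ a regular sequence on the appropriate modules). I would handle this by checking that $(z,u)$ — or an adequate subsequence — is a non-zero-divisor sequence on $\F_1(F)/J_y(G)$ and on $C(F)/J_{y,z}(G)\cdot\O_{n+k+r}$, using the principality of the conductor (Piene's theorem, giving $C(F)=(\Lambda)$ for the relative $\lambda$) to transfer the question to a colength computation on $\O_{n+k+r}$, where finiteness of $\codimAe$ forces the supports to have the right dimension; the snake lemma applied to multiplication by each parameter then yields the vanishing of the obstruction and hence the isomorphism, compatibly with the $\O_{n+1}$-module structures. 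Finally I would record that all identifications are $\O_{n+1}$-linear because every map in sight is a restriction of an $\O_{n+1+k+r}$-module morphism and the quotient ring $\O_{n+1+k+r}/\m_{k+r}$ is canonically $\O_{n+1}$.
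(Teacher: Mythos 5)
Your proposal is correct and follows essentially the same route as the paper: specialise the $r$ unfolding parameters $u$ first, invoking the smooth-source specialisation theorem (Theorem 4.6 of \cite{BobadillaNunoPenafort}, adapted as in Remark \ref{remark:specialisation}) to obtain $N(\hat g)$, and then specialise the $k$ parameters $z$, which recovers $M(g)$ by Definition \ref{def:module}. Your extra discussion of the $\Tor$/depth obstruction and the role of $n\ge 2$ is precisely the content of the cited smooth-case argument that the paper invokes without repeating.
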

\begin{proof} The same proof of the smooth case (Theorem 4.6 of \cite{BobadillaNunoPenafort} and the previous remark) shows that 
\begin{equation*}
    M_\rel (G)\otimes \dfrac{\O_{n+1+k+r}}{\m_r\cdot \O_{n+1+k+r}}\cong N(\hat{g}),
\end{equation*}
since this first specialisation restricts the parameters that were added in the unfolding $F$ of $\hat{f}$, both with smooth domain. Indeed, the only difference is that the definition of $N(\hat{g})$ has a term $J_y(\hat{g})$, but it holds that $\hat{j}(J_y(G))=J_y(\hat{g})$ and, as in the smooth case, $\hat{j}(J_{y,z}(G))=J(\hat{g})$. Hence, 
\begin{align*}
   M_\rel (G) \otimes \dfrac{\O_{n+1+k+r}}{\m_{k+r}\cdot \O_{n+1+k+r}} &= \left( M_\rel (G) \otimes \dfrac{\O_{n+1+k+r}}{\m_{r}\cdot \O_{n+1+k+r}}\right)\otimes \dfrac{\O_{n+1+k}}{\m_{k}\cdot \O_{n+1+k}} \\
   &= N(\hat{g})\otimes \dfrac{\O_{n+1+k}}{\m_{k}\cdot \O_{n+1+k}}  =M(g),
\end{align*}
where the last equality holds by definition of $M(g)$. 
\end{proof}

\begin{remark}\label{remark:specialisation2} As we have commented on before in Remark \ref{remark:specialisation}, the specialisation performed is different than the one appearing in \cite{BobadillaNunoPenafort} for mappings with smooth source. Although this different approach lets us transfer the module structure, it is important for applications to analyse the analogue process. It turns out that, when we restrict $M_\rel (G)$ to be an $\O_{k+r}$-module via the natural inclusion $\O_{k+r}\rightarrow \O_{n+1+k+r}$ induced by the projection $\C^{n+1}\times \C^{k+r}\rightarrow \C^{k+r}$, then it is straightforward to verify that 
\begin{equation*}
    M_\rel(G)\otimes \dfrac{\O_{k+r}}{\m_{k+r}}\cong M(g)
\end{equation*}
as $\C$-vector spaces, just by noticing that 
\begin{align*}
    M_\rel(G)\otimes \dfrac{\O_{k+r}}{\m_{k+r}}&\cong \dfrac{M_\rel (G)}{\m_{k+r}\cdot M_\rel (G)} = \dfrac{M_\rel (G)}{(\m_{k+r}\cdot \O_{n+1+k+r})\cdot M_\rel (G)} \cong \\
    &\cong M_\rel (G) \otimes \dfrac{\O_{n+1+k+r}}{\m_{k+r}\cdot \O_{n+1+k+r}} \cong M(g).
\end{align*}
In a nutshell, the specialisation process can be performed either restricting first $M_\rel (G)$ to be an $\O_{k+r}$-module, or rather keeping its whole $\O_{n+1+k+r}$-module structure. The obtained modules are, respectively, 
$$M_\rel (G)\otimes (\O_{n+1+k+r}/\m_{k+r}\cdot \O_{n+1+k+r}) \text{ and } M_\rel (G)\otimes (\O_{k+r}/\m_{k+r}).$$ 
In both scenarios one recovers $M(g)$ with some structure: in the former case, the result is an $\O_{n+1}$-module isomorphic to $M(g)$, and, in the latter, a $\C$-module isomorphic to $M(g)$. Hence, the specialisation procces that one should perform depends on whether one needs to keep the module structure or not. For most of the cases, the only information that needs to be keeped is the complex dimension as a vector space, and hence both methods are valid. 
\end{remark}

Lastly, an important result regarding the form of the module $M_\rel (G)$ when $F$ is stable is the following:
\begin{proposition} Let $F$ be a stable unfolding of $\hat{f}$ and $G$ an equation such that $G\in J(G)$. Then, 
\begin{equation*}
    M_\rel (G)= \dfrac{J(G)}{J_y(G)}.
\end{equation*}
\end{proposition}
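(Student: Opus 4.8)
The plan is to reduce the assertion to an identity of ideals in $\O_{n+1+k+r}$ and then to quote the smooth-source case. By Proposition \ref{prop:formulaMrelG} and the identity $J_{y,z}(G)\cdot\O_{n+k+r}=J(G)\cdot\O_{n+k+r}$ recalled above (lemma 4.5 of \cite{BobadillaNunoPenafort}), we have
\[
M_\rel(G)=\frac{(F^*)^{-1}\big(J(G)\cdot\O_{n+k+r}\big)}{J_y(G)},
\]
so the proposition is equivalent to the equality of ideals $(F^*)^{-1}\big(J(G)\cdot\O_{n+k+r}\big)=J(G)$ in $\O_{n+1+k+r}$. The inclusion $\supseteq$ is immediate, because $F^*\big(J(G)\big)\subseteq J(G)\cdot\O_{n+k+r}$ by definition of the extended ideal; so the whole content lies in the reverse inclusion: if $F^*(a)\in J(G)\cdot\O_{n+k+r}$, then $a\in J(G)$.

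For this I would exploit that $F$ has smooth source. Since $F$ is an unfolding of $\hat f$ and $\hat f$ has smooth source $(\C^{n+k},S)$, the germ $F\colon(\C^{n+k+r},S\times 0)\to(\C^{n+1+k+r},0)$ has smooth source and target of one dimension more, i.e. it is a germ of exactly the type studied in \cite{BobadillaNunoPenafort}, seen as a stable unfolding of $\hat f$ with parameter space $\C^r$. For such $\hat f$, with base coordinates $(y,z)$, the equality $(F^*)^{-1}\big(J(G)\cdot\O_{n+k+r}\big)=J(G)$ is precisely the identity underlying their description of the relative Jacobian module of a stable unfolding, and the hypothesis needed there is exactly $G\in J(G)$. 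In the notation of \cite{BobadillaNunoPenafort} the role of our $J_y$ is played by $J_{y,z}$, but this is immaterial here, since the argument concerns only the numerator $(F^*)^{-1}\big(J(G)\cdot\O_{n+k+r}\big)$. Granting this, the defining short exact sequence of $M_\rel(G)$ gives at once $M_\rel(G)=J(G)/J_y(G)$.

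If instead one wants a self-contained argument, I would proceed via normal forms: a stable germ in the dimensions $(n+k+r,\,n+1+k+r)$ has corank at most one and, up to $\A$-equivalence, is a multi-germ of Morin type; on such explicit normal forms the identity $(F^*)^{-1}\big(J(G)\cdot\O_{n+k+r}\big)=J(G)$ can be verified directly, writing $J(G)\cdot\O_{n+k+r}$ as $\lambda_F$ times the ramification ideal of $F$ by Piene's theorem \cite{Piene} and using $G\in J(G)$. The reverse inclusion is the one genuine obstacle, and it truly requires stability of $F$: for a non-stable germ it already fails for plane curves, e.g. for $t\mapsto(t^2,t^5)$ one has $g=y_1^5-y_2^2\in J(g)$ yet $(f^*)^{-1}\big(J(g)\cdot\O_1\big)=(y_1^3,y_2)\supsetneq J(g)$. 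The hypothesis $G\in J(G)$ is also essential for the conclusion: without it the preimage would collapse onto $J(G)+(G)$ rather than onto $J(G)$.
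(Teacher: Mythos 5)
Your argument is correct and takes essentially the same route as the paper: both reduce the claim, via Proposition \ref{prop:formulaMrelG} and Lemma 4.5 of \cite{BobadillaNunoPenafort}, to the ideal identity $(F^*)^{-1}\big(J(G)\cdot \O_{n+k+r}\big)=J(G)$, which the paper obtains by observing that for the stable germ $F$ with good equation $G$ the smooth-case absolute module of \cite{BobadillaNunoPenafort} vanishes, $M(G)=0$, and equals precisely $(F^*)^{-1}(J(G)\cdot\O_{n+k+r})/J(G)$. Your examples showing that both stability and $G\in J(G)$ are genuinely needed are a correct (if optional) supplement.
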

\begin{proof}
    Since $F$ is stable and $G\in J(G)$, the smooth version of the module $M(G)$ from \cite{BobadillaNunoPenafort} satisfies that $M(G)=0$. In this case, the formula of proposition 5.1 of this article yields that
    \begin{equation*}
        M(G)=\dfrac{(F^*)^{-1}(J(G)\cdot \O_{n+k+r})}{J(G)},
    \end{equation*}
    and hence $(F^*)^{-1}(J(G)\cdot \O_{n+k+r})=J(G)$. Furthermore, in the smooth case, it follows that $J(G)\cdot \O_{n+k+r}=J_{y,z}(G)\cdot \O_{n+k+r}$. Hence, proposition \ref{prop:formulaMrelG} yields
    \begin{equation*}
    M_\rel (G)=\dfrac{(F^*)^{-1} (J_{y,z}(G)\cdot \O_{n+k+r})}{J_y(G)}=\dfrac{J(G)}{J_y(G)},
\end{equation*}
and the claim follows. 
\end{proof}
Note that, since $\hat{f}$ is a finite mapping, the existence of a stable unfolding $F$ of $\hat{f}$ is granted. Furthermore, as it is commented in \cite{BobadillaNunoPenafort}, there is always a stable unfolding $F$ which admits an equation $G$ with the property that $G\in J(G)$. Such an equation is referred to as a \textit{good defining equation}. Indeed, if $F(u,x)=(u,f_u(x))$ is a stable unfolding of $\hat{f}$, then let $F'$ be the $1$-parameter stable unfolding of $F$ given by $F'(t,u,x)=(t,u,f_u(x))$. Let $G=0$ be a reduced equation of the image of $F$ and consider $G'(t,u,y)=e^t G(u,y)$. It therefore follows that $G'=0$ is a reduced equation defining the image of $F'$ with the property that $G'=\partial_t G'\in J(G')$. 
\section{Relation between $\dim_\C M(g)$ and  $\codimAe (X,f)$}
Let $F$ be a stable unfolding of $\hat{f}$, and consider an equation $G$ of the image of $F$, namely, $(Z,0)$, that satisfies $G\in J(G)$. Then, the last result of the previous section showed that $M_\rel (G) =J(G)/J_y(G)$. Let us relate this with the codimension of $(X,f)$, which can be determined through the formula
\begin{equation*}
    \codimAe (X,f)=\dim_\C \dfrac{\theta(i)}{ti(\theta_{n+1})+i^*(\Derlog Z)},
\end{equation*}
where $i:(\C^{n+1},0)\rightarrow (\C^{n+1+k+r},0)$ denotes the natural immersion (see \cite{Mond-Montaldi} for more details). 

Recall that $\Derlog Z = \{\xi \in \theta_{n+1+k+r}: \xi(G)=\lambda (G)\}$, and $\Derlog G = \{\xi \in \theta_{n+1+k+r}: \xi(G)=0\}$. Notice that $G\in J(G)$, so that $G=\sum_{s} a_s \partial_s G $, where $\partial_s G$ denotes the partial derivatives with respect to all the variables in $(y,z,u)\in \C^{n+1+k+r}$. Hence, the vector field $\epsilon = \sum_s a_s\partial_s$ satisfies that $\epsilon (G)=G$, where $\partial_s$ denotes the coordinate vector field associated with the $s$-th coordinate, where $s\in \{1, \ldots, n+1+k+r\}$. Furthermore, $\Derlog Z = \Derlog G \oplus \langle \epsilon \rangle$. We therefore have that  
\begin{equation*}
    \codimAe (X,f)=\dim_\C \dfrac{\theta(i)}{ti(\theta_{n+1})+i^*(\Derlog G)+i^*(\epsilon)}.
\end{equation*}
Notice that the evaluation mapping $\text{ev}:\theta_{n+1+k+r}\rightarrow J(G)$ given by $\xi \mapsto \xi (G)$ is a surjective mapping with kernel $\Derlog G$. Hence, it induces an isomorphism
\begin{equation*}
    \dfrac{\theta_{n+1+k+r}}{\Derlog G} \cong J(G).
\end{equation*}
Thus, 
\begin{equation*}
    \dfrac{\theta_{n+1+k+r}}{\langle \frac{\partial}{\partial y_1}, \ldots, \frac{\partial}{\partial y_{n+1}} \rangle +\Derlog G} \cong \dfrac{J(G)}{J_y (G)}=M_\rel (G).
\end{equation*}
Tensoring with $\O_{k+r}/\m_{k+r}$ yields that 
\begin{equation*}
    \dfrac{\theta(i)}{ti(\theta_{n+1}) +i^*\Derlog G} \cong M(g).
\end{equation*} 
Now, notice that the evaluation map acting on $\epsilon$ gives $\epsilon (G)=G$, and hence $i^*(\text{ev}(\epsilon))=i^*(G)=g$. Therefore, if $K(g)=(J(g)+(g))/J(g)$, then 
\begin{equation*}
    0 \rightarrow K(g) \rightarrow \dfrac{\theta(i)}{ti(\theta_{n+1}) +i^*\Derlog G} \rightarrow \dfrac{\theta(i)}{ti(\theta_{n+1}) +i^*\Derlog G +i^*(\epsilon)}\rightarrow 0
\end{equation*}
is a short exact sequence. Indeed, the evaluation map satisfies that $\text{ev}(ti(\theta_{n+1}))=J(g)$ and that $\text{ev}(i^*\Derlog G)=0$. Hence, the evaluation map yields an isomorphism 
\begin{equation*}
    \dfrac{ti(\theta_{n+1}) +i^*\Derlog G+i^*(\epsilon)}{ti(\theta_{n+1}) +i^*\Derlog G }\cong \dfrac{J(g)+(g)}{J(g)}=K(g).
\end{equation*}
After taking lengths in the exact sequence, and taking into account the previous assertions, it follows that 
\begin{theorem}\label{relation} In the above conditions, 
\begin{equation*}
    \dim_\C M(g) = \dim_\C K(g)+\codimAe (X,f). 
\end{equation*}
In particular, $\dim_\C M(g)\geq \codimAe (X,f)$, with equality in case that $g$ is weighted homogeneous.
\end{theorem}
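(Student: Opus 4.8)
The plan is to leverage the identification $M_\rel (G) = J(G)/J_y(G)$ established in the previous section (valid since $F$ is a stable unfolding of $\hat f$ and $G$ is a good defining equation, i.e.\ $G\in J(G)$) together with the description of $\codimAe (X,f)$ in terms of logarithmic vector fields, and to bridge the two via the evaluation map $\xi\mapsto\xi(G)$.

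First I would write $\codimAe (X,f)=\dim_\C \theta(i)/\big(ti(\theta_{n+1})+i^*\Derlog Z\big)$, where $i\colon(\C^{n+1},0)\to(\C^{n+1+k+r},0)$ is the natural immersion. Using $G\in J(G)$, write $G=\sum_s a_s\,\partial_s G$; the vector field $\epsilon=\sum_s a_s\,\partial_s$ then satisfies $\epsilon(G)=G$, so $\Derlog Z=\Derlog G\oplus\langle\epsilon\rangle$ and the codimension becomes $\dim_\C \theta(i)/\big(ti(\theta_{n+1})+i^*\Derlog G+i^*(\epsilon)\big)$. Next I would use that the evaluation map $\theta_{n+1+k+r}\to J(G)$ is surjective with kernel $\Derlog G$, hence induces $\theta_{n+1+k+r}/\Derlog G\cong J(G)$; quotienting the source by $\langle\partial/\partial y_1,\dots,\partial/\partial y_{n+1}\rangle$ and the target by $J_y(G)$ gives $\theta_{n+1+k+r}/\big(\langle\partial/\partial y_j\rangle+\Derlog G\big)\cong J(G)/J_y(G)=M_\rel (G)$. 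Specialising by $\otimes\,\O_{k+r}/\m_{k+r}$, and invoking the specialisation theorem of the preceding section, yields $\theta(i)/\big(ti(\theta_{n+1})+i^*\Derlog G\big)\cong M(g)$.

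Finally I would compare this with the codimension quotient, obtained from $M(g)$ by further dividing by the class of $i^*(\epsilon)$. Under the evaluation map $i^*(\epsilon)\mapsto i^*(G)=g$, while $ti(\theta_{n+1})$ maps onto $J(g)$ and $i^*\Derlog G$ maps to $0$; thus the extra relation cuts out a copy of $K(g)=(J(g)+(g))/J(g)$, giving the short exact sequence
\[
0\to K(g)\to \frac{\theta(i)}{ti(\theta_{n+1})+i^*\Derlog G}\to \frac{\theta(i)}{ti(\theta_{n+1})+i^*\Derlog G+i^*(\epsilon)}\to 0.
\]
Taking $\C$-dimensions yields $\dim_\C M(g)=\dim_\C K(g)+\codimAe (X,f)$; the inequality $\dim_\C M(g)\geq\codimAe (X,f)$ is then immediate, and when $g$ is weighted homogeneous Euler's relation gives $g\in J(g)$, so $K(g)=0$ and equality holds.

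The step I expect to require the most care is the specialisation: one must check that $\otimes\,\O_{k+r}/\m_{k+r}$ commutes with the relevant quotients and is compatible with the evaluation-map identifications — that is, that it genuinely carries $M_\rel (G)$ to $M(g)$ and transports the logarithmic and Jacobian data along with it. This is precisely what the specialisation results of the previous section are designed to supply, so with those in hand the remaining arguments are formal diagram chases and length counts.
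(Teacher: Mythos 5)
Your proposal is correct and follows essentially the same route as the paper: the identification $M_\rel(G)=J(G)/J_y(G)$, the splitting $\Derlog Z=\Derlog G\oplus\langle\epsilon\rangle$, the evaluation-map isomorphism, the specialisation to $\theta(i)/\big(ti(\theta_{n+1})+i^*\Derlog G\big)\cong M(g)$, and the short exact sequence involving $K(g)$ all match the paper's argument. The only addition is your explicit appeal to Euler's relation in the weighted homogeneous case, which the paper leaves implicit.
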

Furthermore, this formula shows that $\dim_\C M(g)$ only depends on the isomorphism class of $g$, and neither depends on the mapping $f$ nor on the chosen extension $\hat{f}$. 

When either $(n,n+1)$ are nice dimensions or in corank one, all stable singularities are weighted homogeneous (see \cite{juanjo}). This gives the following direct consequence of Theorem \ref{relation}:

\begin{corollary}\label{nice} Assume that either $(n,n+1)$ are nice dimensions or $(X,f)$ has corank one. Then, $M(g)=0$ if and only if $(X,f)$ is stable.
\end{corollary}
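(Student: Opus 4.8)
The plan is to deduce Corollary \ref{nice} directly from Theorem \ref{relation} together with the characterisation of stability recalled in Section 2. The statement is an ``if and only if'', so I would treat the two implications separately, but both rest on the same identity $\dim_\C M(g) = \dim_\C K(g) + \codimAe(X,f)$.

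For the easy direction, suppose $(X,f)$ is stable. Then $\codimAe(X,f)=0$, and moreover (as recalled after the versality theorem) stability of $(X,f)$ forces $(X,S)$ to be smooth and $f$ to be stable in the usual sense; in particular the image $(Y,0)$ is a stable hypersurface germ, hence weighted homogeneous — here is where the hypothesis that $(n,n+1)$ are nice dimensions or that $(X,f)$ has corank one enters, via the cited fact from \cite{juanjo} that all stable germs in this range are weighted homogeneous. Weighted homogeneity of $g$ gives $g\in J(g)$, so $K(g)=(J(g)+(g))/J(g)=0$. Plugging $\codimAe(X,f)=0$ and $\dim_\C K(g)=0$ into Theorem \ref{relation} yields $\dim_\C M(g)=0$, i.e. $M(g)=0$.

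For the converse, suppose $M(g)=0$. Since $\dim_\C K(g)\ge 0$ and $\codimAe(X,f)\ge 0$, the identity of Theorem \ref{relation} forces both summands to vanish; in particular $\codimAe(X,f)=0$. By the versality characterisation of stability for mappings on \textsc{icis} (Mond--Montaldi, recalled in Section 2), $\codimAe(X,f)=0$ is equivalent to $(X,f)$ being stable, and we are done. Note this direction does not even need the nice-dimensions/corank-one hypothesis; it is only the forward implication that uses it, to guarantee $K(g)=0$.

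I do not anticipate a genuine obstacle here: the corollary is essentially a bookkeeping consequence of Theorem \ref{relation} plus two facts already in the paper (the algebraic criterion $\codimAe=0 \Leftrightarrow$ stable, and ``stable $\Rightarrow$ weighted homogeneous'' in nice dimensions or corank one). The only point requiring a little care is making sure the hypothesis is invoked in the right place: it is needed precisely to pass from ``$(X,f)$ stable'' to ``$g$ weighted homogeneous'' (equivalently $g\in J(g)$, $K(g)=0$), without which the forward implication could fail. I would state the proof in two short paragraphs mirroring the two implications above.
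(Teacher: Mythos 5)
Your proposal is correct and follows exactly the route the paper intends: the corollary is stated there as a direct consequence of Theorem \ref{relation}, using that stable germs are weighted homogeneous in nice dimensions or corank one (so $K(g)=0$) for the forward implication, and that $\codimAe(X,f)=0$ characterises stability for the converse. Your observation that the nice-dimensions/corank-one hypothesis is only needed in the forward direction is accurate and consistent with the paper's presentation.
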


\section{The Jacobian module and the Mond conjecture}\label{section:5_3}
This section is the centerpiece of this paper. We present one of the main results we aim to establish, namely, a formula for the image Milnor number expressed in terms of the Samuel multiplicity of the module $M_\rel(G)$. Additionally, we prove that the generalised Mond conjecture holds provided the module $M_\rel (G)$ is Cohen-Macaulay. 

%In order to do so, we state a relevant result from Siersma \cite{Siersma} regarding the homotopy type of a fibre:
%\begin{theorem}[Siersma] Let $g:(\C^{n+1},0)\rightarrow (\C,0)$ define a reduced hypersurface $(X_0,0)$, not necessarily with isolated singularity, and let $G:(\C^{n+1+r},0)\rightarrow (\C,0)$ be a deformation of $g$ such that
%\begin{enumerate}
%    \item $G$ is topologically trivial over the Milnor sphere $\partial B_\epsilon$, and
%    \item for all $u$, all the critical points of $g_u$ which are not in $X_u=g_u^{-1}(0)\cap B_\epsilon$ are isolated. 
%\end{enumerate}
%    Then, $X_u\cap B_\epsilon$ is homotopy equivalent to a wedge of $n$-spheres and the number of such $n$-spheres is equal to 
%    \begin{equation*}
%        \sum_{y\in B_\epsilon \setminus X_u} \mu (g_u; y),
%    \end{equation*}
%    where $\mu(g_u,y)$ denotes the Milnor number of the function $g_u$ at the point $y$. 
%\end{theorem}

This theorem is the key ingredient to show the main result of the article \cite{BobadillaNunoPenafort}, which is Theorem 6.1, and whose proof can be easily adapted to this new setting:

\begin{theorem} Assume that either $(n,n+1)$ are nice dimensions or $(X,f)$ has corank one. Let $F$ be a stable unfolding of $\hat{f}$. With the notations of Section \ref{section3}, $$\mu_I(X,f)=e(\m_{k+r},M_\rel(G)),$$
that is, $\mu_I(X,f)$ equals the Samuel multiplicity of the $\O_{k+r}$-module $M_\rel (G)$ with respect to $\m_{k+r}$.
\end{theorem}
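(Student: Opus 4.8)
The plan is to follow closely the structure of the proof of Theorem 6.1 in \cite{BobadillaNunoPenafort}, adapting each step to the present setting where the smooth source $\C^{n+k}$ plays the role that $\C^n$ played there. The starting point is Siersma's formula (Theorem \ref{Siersma}), which expresses $\mu_I(X,f)$ as a sum of local Milnor numbers $\mu(g_u;y)$ of the slice $g_u$ of a reduced equation $G$ of the image of a stable unfolding $F$, the sum being taken over the (finitely many) points $y$ in the complement of $f_u(X_u)$ inside a small ball, for a generic parameter value $u$ outside the bifurcation set. The first step is therefore to identify this sum of Milnor numbers with a colength: by the standard deformation-theoretic description of the Milnor number, $\sum_y \mu(g_u;y)$ equals the dimension over $\C$ of $\O_{n+1}/J(g_u)$ localised away from the image, and this in turn can be read off as the length of the cokernel of a suitable relative Jacobian map evaluated at $u$.

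The second and central step is to interpret that length multiplicity-theoretically. Since $F$ is stable, the previous proposition gives $M_\rel(G) = J(G)/J_y(G)$, and by the identification in Remark \ref{remark:specialisation2} the specialisation $M_\rel(G)\otimes \O_{k+r}/\m_{k+r}$ recovers $M(g)$, which by Theorem \ref{relation} has finite $\C$-dimension. Hence $M_\rel(G)$, viewed as an $\O_{k+r}$-module, is a module whose fibre at the origin is finite-dimensional, so $\m_{k+r}$ is an ideal of definition on its support and the Samuel multiplicity $e(\m_{k+r}, M_\rel(G))$ is defined. The plan is then to show that the number $\sum_{y} \mu(g_u;y)$ computed for generic $u$ is exactly the generic fibre dimension of $M_\rel(G)$ along a generic line through the origin in $\C^{k+r}$, i.e. the multiplicity: one chooses, as in the remark after Theorem \ref{Siersma}, a line $L$ with $L\cap\mathcal B=\{0\}$, restricts $M_\rel(G)$ to $L$, and observes that its generic colength along $L$ both equals $\mu_I(X,f)$ (by Siersma) and equals $e(\m_{k+r},M_\rel(G))$ (by the associativity/additivity formula for multiplicity together with the fact that a generic line realises the Samuel multiplicity of a module supported at a point in the sense of the Achilles--Manaresi or classical Samuel theory). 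This is where one must be careful that the relevant localisation "away from the image $f_u(X_u)$" matches the support of $M_\rel(G)$; the point is that on the image the equation $G$ and its $y$-derivatives behave so that $J(G)/J_y(G)$ is supported precisely off the image, which is exactly the content that makes $M(g)$ — rather than the full Jacobian algebra — the right object.

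The main obstacle I expect is the conservation-of-number / flatness argument needed to guarantee that the colength of the relative module is constant along a generic line and that no multiplicity is "lost at infinity" or absorbed into the image locus as the parameter degenerates to $0$; equivalently, one needs that $M_\rel(G)$ is, after restriction to a generic line $L$, a Cohen--Macaulay (indeed free) $\O_L$-module, or at least that its length localised at the points off the image is conserved. In the smooth case of \cite{BobadillaNunoPenafort} this is handled by a careful analysis of the module $M_y(G)$ and an appeal to the fact that the relevant Tor's vanish; here the same argument applies verbatim to $\hat f$ and $F$ since these have smooth source, and then one specialises using the two-step tensor computation in the proof of the specialisation theorem above. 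So the proof reduces, modulo bookkeeping of the extra $z$-variables, to invoking the smooth-source result for $(\hat f, F)$ and transporting it through the specialisation; the assumption that $(n,n+1)$ are nice dimensions or $(X,f)$ has corank one is used exactly to ensure the stabilisation exists and that $\mathcal B$ is a proper analytic subset, so that a generic line $L$ is available.
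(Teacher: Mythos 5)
Your plan follows essentially the same route as the paper's proof: Siersma's formula combined with conservation of multiplicity for the finitely generated $\O_{k+r}$-module $M_\rel(G)$, with the local contributions at a generic parameter $w$ split into points on the image $Y_w$ (which contribute nothing) and points off it (where $M_\rel(G)$ localises to the complete intersection $\O/J_y(G)$, which is Cohen--Macaulay of dimension $k+r$, so that local multiplicity equals colength equals the local Milnor number $\mu(g_w;p)$). One caveat to fix in execution: $J(G)/J_y(G)$ is \emph{not} supported off the image of $F$ (its fibre at the origin is $M(g)\neq 0$ in general, and one cannot simply invoke the smooth-source theorem for $\hat f$, which computes the different invariant $\mu_I(\hat f)$ via $e(\m_r,-)$); the correct statement is that for generic $w$ the localisations $M_\rel(G)_{(p,w)}$ vanish at points $p\in Y_w$, which follows from stability of $f_w$ together with Corollary \ref{nice} and Nakayama's lemma.
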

\begin{proof} Take a representative of $F$ and let $w\in \C^k \times \C^r$ be a generic value. The conservation of multiplicity implies that
    \begin{equation*}
        e\Big(\m_{k+r}, M_\rel (G)\Big)=\sum_{p\in B_\epsilon} e\Big( \m_{k+r,w}, M_\rel (G)_{(p,w)} \Big).
    \end{equation*}
    In order to compute the previous multiplicity, let us take first the points $p\in Y_w\cap B_\epsilon$, where $Y_w$ is the image of $f_w:X_w\rightarrow \C^{n+1}$. Since the module $M_\rel (G)$ specialises to $M(g)$, it follows that 
    \begin{equation*}
        M_\rel (G)_{(p,w)}\otimes \dfrac{\O_{k+r,w}}{\m_{k+r,w}}\cong M(g_w)_p
    \end{equation*}
    as $\C$-vector spaces in virtue of Remark \ref{remark:specialisation2}. If $F$ is a stable unfolding, it follows that, provided $w$ is generic, $f_w$ is a stable mapping, and hence $ M(g_w)_p=0$ for each $p\in Y_w$, by Corollary \ref{nice}. Hence, the points $p\in Y_w \cap B_\epsilon$ do not contribute to the term $e(\m_{k+r}, M_\rel (G))$. 
    
    On the other hand, if $p\in B_\epsilon / Y_w$, then 
    \begin{equation*}
        M_\rel (G)_{(p,w)}  = \dfrac{\O_{\C^{k+r}\times B_\epsilon, (p,w)}}{J_y (G)}
    \end{equation*}
    is a module with dimension $\geq k+r$. Indeed, this follows from the exact sequence that defines $M_\rel (G)$, since the localisation of $C(F)/(J_{y,z}(G)\cdot \O_{n+k+r})$ is zero if $p\notin Y_w$, and $\F_1(F)$ localises to $\O_{\C^{k+r}\times B_\epsilon, (p,w)}$. Moreover, 
    \begin{equation*}
        M_\rel (G)_{(p,w)}\otimes \dfrac{\O_{k+r,w}}{\m_{k+r,w}} \cong  \dfrac{\O_{B_\epsilon, p}}{J(g_w)}
    \end{equation*}
    is a module with dimension $0$, since it has finite length due to the fact that $g_u$ has isolated singularity. This implies that the dimension of $M_\rel (G)$ is $\leq k+r$. Hence, $M_\rel (G)_{(p,u)}$ is a complete intersection ring, and in particular a Cohen-Macaulay $\O_{k+r}$-module of dimension $k+r$. Hence, 
    \begin{align*}
        e\Big( \m_{k+r,w}, M_\rel (G)_{(p,w)} \Big) &= \dim_\C \left( M_\rel (G)_{(p,w)} \otimes \dfrac{\O_{k+r,w}}{\m_{k+r,w}} \right) \\&= \dim_\C \dfrac{\O_{B_\epsilon, p}}{J(g_w)} = \mu(g_w, p). 
    \end{align*}
    By Siersma's Theorem \ref{Siersma}, it follows that $\sum_{p\in B_\epsilon / Y_w} \mu(g_w,p) = \mu_I(X,f)$. 
\end{proof}
Once this result has been proven, the following theorem is now an immediate consequence:
\begin{theorem}\label{theorem:MC} In the above conditions, the following statements are equivalent and imply the generalised Mond conjecture for $(X,f)$:
\begin{enumerate}
    \item $\dim_\C M(g)=\mu_I(X,f)$,
    \item $M_\rel(G)$ is a Cohen-Macaulay $\O_{k+r}$-module of dimension $k+r$.  
\end{enumerate}
Furthermore, if $g$ is weighted homogeneous and satisfies the generalised Mond conjecture, then the above assertions hold. 
\end{theorem}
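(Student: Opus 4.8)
The plan is to mirror the structure of the proof of Theorem 6.1 in \cite{BobadillaNunoPenafort}, using the multiplicity formula $\mu_I(X,f)=e(\m_{k+r},M_\rel(G))$ just established as the engine. First I would record the general inequality relating the colength of a module to its Samuel multiplicity: for any finitely generated $\O_{k+r}$-module $N$ supported in dimension $\le k+r$, one has $\dim_\C\big(N\otimes \O_{k+r}/\m_{k+r}\big)\ge e(\m_{k+r},N)$, with equality if and only if $N$ is Cohen-Macaulay of dimension exactly $k+r$ (this is the standard fact that a module achieves its multiplicity as minimal number of generators of a reduction precisely when it is Cohen-Macaulay and equidimensional of top dimension; when $\dim N<k+r$ the multiplicity is $0$ and the colength is positive, so the strict inequality still holds). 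Applying this with $N=M_\rel(G)$ and using Remark \ref{remark:specialisation2}, which gives $M_\rel(G)\otimes \O_{k+r}/\m_{k+r}\cong M(g)$ as $\C$-vector spaces, the inequality reads $\dim_\C M(g)\ge \mu_I(X,f)$, with equality if and only if $M_\rel(G)$ is Cohen-Macaulay of dimension $k+r$. This is exactly the equivalence of (1) and (2).

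Next I would deduce the generalised Mond conjecture from either equivalent condition. By Theorem \ref{relation} we always have $\dim_\C M(g)=\dim_\C K(g)+\codimAe(X,f)$, so if (1) holds then
\begin{equation*}
\mu_I(X,f)=\dim_\C M(g)=\dim_\C K(g)+\codimAe(X,f)\ge \codimAe(X,f),
\end{equation*}
which is the inequality part of the conjecture. For the weighted homogeneous equality statement, note that $g$ weighted homogeneous forces $g\in J(g)$ by the Euler relation, hence $K(g)=(J(g)+(g))/J(g)=0$; then (1) gives $\mu_I(X,f)=\codimAe(X,f)$ exactly. So (1) (equivalently (2)) implies the generalised Mond conjecture for $(X,f)$ in its full strength.

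Finally, for the converse-type last sentence: suppose $g$ is weighted homogeneous and $(X,f)$ satisfies the generalised Mond conjecture, i.e.\ $\mu_I(X,f)=\codimAe(X,f)$. Since $g$ is weighted homogeneous we again have $K(g)=0$, so Theorem \ref{relation} gives $\dim_\C M(g)=\codimAe(X,f)=\mu_I(X,f)$, which is precisely statement (1); by the equivalence already proved, (2) holds as well. The one point that needs a little care — and is the only real obstacle — is justifying that $M_\rel(G)$ has dimension at most $k+r$ so that the colength-versus-multiplicity dichotomy applies cleanly; but this was already carried out inside the proof of the multiplicity theorem above (the localisations of $M_\rel(G)$ at points off the image are complete intersection rings of dimension exactly $k+r$, and at points on the image the module vanishes after specialising, forcing $\dim M_\rel(G)\le k+r$), so I would simply invoke that computation rather than repeat it.
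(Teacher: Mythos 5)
Your proposal is correct and follows essentially the same route as the paper: the equivalence of (1) and (2) via the colength-versus-Samuel-multiplicity criterion for Cohen--Macaulayness applied to $M_\rel(G)$ together with the specialisation $M_\rel(G)\otimes\O_{k+r}/\m_{k+r}\cong M(g)$, and the Mond conjecture statements via Theorem \ref{relation} and the vanishing of $K(g)$ in the weighted homogeneous case. The extra care you take about the case $\dim M_\rel(G)<k+r$ is a reasonable (and harmless) refinement of the paper's argument.
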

    \begin{proof} Recall that the Samuel multiplicity of an $R$-module $M$ is generally smaller than the length of $M/(\m\cdot M)$, with equality if and only if $M$ is a Cohen-Macaulay module with the same dimension as $R$. Hence, $M_\rel (G)$ is Cohen-Macaulay of dimension $k+r$ if and only if
    \begin{align*}
        \mu_I(X,f)&=e\Big(\m_{k+r}, M_\rel (G)\Big) = \dim_\C \dfrac{M_\rel (G)}{\m_{k+r} M_\rel (G)}\\&= \dim_\C M_\rel (G)\otimes \dfrac{\O_{k+r}}{\m_{k+r}}=\dim_\C M(g).
    \end{align*}
    Therefore, if one of the items hold, it then follows that 
    \[
    \mu_I(X,f)=\dim_{\C} M(g) = \dim_\C K(g) + \codimAe (X,f) \geq \codimAe (X,f),
    \] 
    and hence, the generalised Mond conjecture holds for $(X,f)$. Moreover, if $g$ is weighted homogeneous, then $K(g)=0$. Thus, if the generalised Mond conjecture holds for $(X,f)$, then 
    \[\mu_I(X,f)=\codimAe(X,f)=\dim_\C M(g)-\dim_\C K(g)=\dim_\C M(g).
    \] 
    Hence, the above assertions hold. 
    \end{proof}
\begin{remark} Recent work by Nuño-Ballesteros and Giménez Conejero \cite{roberto2} has shown that the requirement for $M_\rel(G)$ to be $k+r$-dimensional can be eliminated from the second condition. Thus, we have that $\dim_\C M(g)=\mu_I(X,f)$ if and only if $M_\rel (G)$ is a Cohen-Macaulay module. Indeed, if the dimension of $M_\rel (G)$ is strictly less than $k+r$, then $\mu_I(X,f)$ is zero, which, according to \cite{roberto2}, implies that $(X,f)$ is a stable map-germ, and hence $\codimAe(X,f)=0$.
\end{remark}
This result shows that in order to establish the validity of the Mond conjecture for $f$, it suffices to check the Cohen-Macaulay property of the module $M_\rel(G)$. While it has been observed that this module is Cohen-Macaulay in every computed example, it remains an open question to provide a rigorous proof of this statement.

\section{Proof of the generalised Mond conjecture for \textit{n\,=\,2}}\label{section:5_4}
In this section, we achieve our main objective of the article, which is to show the validity of the generalised Mond conjecture for $n=2$ by employing the results from the previous section.

Building on the main result of the previous section, to establish the generalised Mond conjecture for mappings $f:(X,0)\rightarrow (\C^3,0)$ where $(X,0)$ is a 2-dimensional \textsc{icis,} it suffices to check that $M_\rel (G)$ is a Cohen-Macaulay module of dimension $k+r$. Notice first that the dimension of $M_\rel (G)$ is $\leq k+r$, due to the fact that 
\begin{equation*}
    \dim_\C M_\rel (G)\otimes \dfrac{\O_{n+1+k+r}}{\m_{k+r}\cdot \O_{n+1+k+r}} =\dim_\C M(g) <+\infty.
\end{equation*}
Therefore, it is enough to show that $\depth M_\rel (G) \geq k+r$. To achieve this, we apply the depth lemma to the exact sequence
\begin{equation*}
    0\rightarrow M_\rel(G) \rightarrow \dfrac{\F_1(F)}{J_y(G)}\rightarrow \dfrac{C(F)}{J_{y,z}(G)\cdot \O_{n+k+r}} \rightarrow 0
\end{equation*}
to obtain that
\begin{equation*}
    \depth M_\rel (G)\geq \min \left\{ \depth \dfrac{\F_1(F)}{J_y (G)}, \depth \dfrac{C(F)}{J_{y,z} (G)\cdot \O_{n+k+r}} + 1 \right\}.
\end{equation*}
Hence, both terms of the minimum have to be shown to be greater than or equal to $k+r$. For the module $C(F)/J_{y,z} (G)\cdot \O_{n+k+r}$, it has been checked in Remark 3.9 of \cite{BobadillaNunoPenafort} that this module is Cohen-Macaulay of dimension $n+k+r-2$ provided $n\geq 2$, due to the fact that it is isomorphic to the determinantal ring $\O_{n+k+r}/R(F)$, where $R(F)$ denotes the ramification ideal of $F$. In particular, it follows that 
\begin{equation*}
    \depth \dfrac{C(F)}{J_y (G)\cdot \O_{n+k+r}} + 1 = n+k+r-1 \geq k+r.
\end{equation*}
Therefore, it is enough to verify that $\depth \F_1(F)/J_y (G) \geq k+r$. Notice that, up to this point, the assumption $n=2$ has not been used yet. In general, the module $\F_1(F)/J_y (G)$ has dimension 
\begin{equation*}
    \dim \dfrac{\F_1(F)}{J_y (G)} = \max \left\{\dim M_\rel (G), \dim \dfrac{C(F)}{J_y (G)\cdot \O_{n+k+r,0}} \right\} =n+k+r-2,
\end{equation*}
due to the fact that $\dim M_\rel (G) \leq k+r \leq n+k+r-2$ provided $n\geq 2$.
Therefore, it is not expected that $M_\rel(G)$ will be Cohen-Macaulay for every $n\geq 2$. The only case in which we can expect this is when $n=2$, since, in this case, its dimension is precisely $k+r$. To verify this claim, we make use of Pellikaan's Theorem:
\begin{theorem}[Pellikaan, Section 3 of \cite{Pellikaan}] If $J\subset F\subset R$ are ideals of $R$ where $J$ is generated by $m$ elements, $\grade (F/J)\geq m$ and $\pd\, (R/F)=2$, then $F/J$ is a perfect module and grade$\,(F/J)=m$.
\end{theorem}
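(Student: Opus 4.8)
The plan is to reduce the statement to a bound on projective dimension and then realise $F/J$ as the cokernel of an explicit matrix to which the acyclicity criterion for the Buchsbaum--Rim complex applies. Since $\grade(F/J)\le\pd(F/J)$ always and $\grade(F/J)\ge m$ by hypothesis, it suffices to produce a free resolution of $F/J$ of length at most $m$: then $m\le\grade(F/J)\le\pd(F/J)\le m$, forcing $\grade(F/J)=\pd(F/J)=m$, i.e.\ $F/J$ is perfect of grade exactly $m$. (Implicitly $m\ge 1$, since $m=0$ would force $F$ to be free, contradicting $\pd(R/F)=2$.)

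To set this up, use $\pd(R/F)=2$ to fix the minimal free resolution
\[
0\longrightarrow R^{b}\xrightarrow{\ d_2\ } R^{b+1}\xrightarrow{\ d_1\ } R\longrightarrow R/F\longrightarrow 0,
\]
where the ranks are forced by the Euler characteristic. Then $F=\im d_1$ and $\ker d_1=\im d_2$, so $F\cong R^{b+1}/\im d_2$. As $J\subseteq F=\im d_1$, the $m$ chosen generators of $J$ lift through $d_1$ to a map $\psi\colon R^m\to R^{b+1}$ with $d_1\circ\psi$ of image $J$. A diagram chase through these identifications (using $\ker d_1=\im d_2$) shows that $d_1^{-1}(J)=\im\psi+\im d_2$, and hence
\[
F/J\;\cong\;\operatorname{coker}\bigl(\Phi\bigr),\qquad \Phi:=[\,d_2\mid\psi\,]\colon R^{b+m}\longrightarrow R^{b+1}.
\]

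Now $\Phi$ has more columns than rows, and $\operatorname{Supp}(\operatorname{coker}\Phi)=V\bigl(I_{b+1}(\Phi)\bigr)$ is the vanishing locus of the ideal of maximal minors of $\Phi$; consequently
\[
\grade I_{b+1}(\Phi)=\grade(F/J)\ge m=(b+m)-(b+1)+1,
\]
which is the largest value this grade can take. By the standard acyclicity theorem for the Buchsbaum--Rim complex, the Buchsbaum--Rim complex of $\Phi$ is then acyclic and provides a free resolution of $F/J$ of length $(b+m)-(b+1)+1=m$. Therefore $\pd(F/J)\le m$, and by the first paragraph $F/J$ is perfect of grade $m$. (For $m=1$ the matrix $\Phi$ is square and the complex degenerates to $0\to R^{b+1}\xrightarrow{\Phi}R^{b+1}\to F/J\to 0$, with $\det\Phi$ a nonzerodivisor by the grade hypothesis.)

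The two delicate points in a full write-up are the identification $F/J\cong\operatorname{coker}\Phi$ (the chase through the lift $\psi$ together with $\ker d_1=\im d_2$) and the equality $\grade I_{b+1}(\Phi)=\grade(F/J)$, obtained by matching supports and using that grade in a Cohen--Macaulay ring depends only on the support. The hypothesis $\pd(R/F)=2$ is used essentially: it forces $\pd F=1$, so that the only ``non-Koszul'' datum in a resolution of $F/J$ is the single syzygy matrix $d_2$, and hence a single Buchsbaum--Rim complex already resolves $F/J$; if $\pd(R/F)$ were larger one would need to splice in the higher syzygies of $R/F$ and the conclusion would fail in general. I expect the main obstacle to be a clean verification of this acyclicity/grade condition rather than any isolated computation.
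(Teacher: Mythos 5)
The paper does not prove this statement: it is quoted verbatim as a known result of Pellikaan and used as a black box in the proof of Theorem 7.2 (the case $R=\O_{n+1+k+r}$, $F=\F_1(F)$, $J=J_y(G)$, $m=3$). So there is no internal proof to compare yours against; what can be said is that your argument is correct and is essentially a streamlined version of Pellikaan's own: he likewise builds an explicit finite free resolution of $F/J$ out of the length-two resolution of $R/F$ and the $m$ generators of $J$, and invokes a grade-sensitive acyclicity criterion. Your reduction of $F/J$ to $\operatorname{coker}[\,d_2\mid\psi\,]$ followed by a single application of the Buchsbaum--Rim theorem is a clean way to package this, and the logic $m\le\grade(F/J)\le\pd(F/J)\le m$ is exactly the right closing step.

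Three small points to tighten in a full write-up. First, the rank count $b,\ b+1,\ 1$ in the resolution of $R/F$ requires knowing that $R/F$ has rank $0$, i.e.\ that $F$ contains a nonzerodivisor; this does follow, because $R/F$ has finite free resolution (so is free of constant rank at every associated prime of $R$) and $F\neq 0$ forces $F_{\mathfrak p}\neq 0$ at some associated prime $\mathfrak p$, whence the rank is $0$ --- but it deserves a sentence. Second, the equality $\grade I_{b+1}(\Phi)=\grade(F/J)$ needs no Cohen--Macaulay hypothesis: the zeroth Fitting ideal $I_{b+1}(\Phi)$ and $\Ann(F/J)$ have the same radical, and the grade of an ideal depends only on its radical (via $\grade(I,R)=\min\{\depth R_{\mathfrak p}:\mathfrak p\in V(I)\}$). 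Third, one should note the tacit hypotheses $m\ge 1$ and $F\neq J$ (you already flag the first; the second is what makes $\grade(F/J)$ finite and the perfection claim non-vacuous). None of these affects the validity of the argument, and in the paper's application all of them hold.
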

This result plays a crucial role to show that the module is indeed Cohen-Macaulay, as it is proven in the following proposition:
\begin{theorem}\label{theorem:n=2} If $n=2$, then $\F_1(F)/J_y (G)$ is a Cohen-Macaulay module. 
\end{theorem}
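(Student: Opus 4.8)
The plan is to invoke Pellikaan's Theorem with $R = \O_{n+k+r}$, $J = J_y(G)\cdot\O_{n+k+r}$ and $F = \F_1(F)\cdot\O_{n+k+r}$; but since what we really want is the depth of $\F_1(F)/J_y(G)$ as stated (over $\O_{n+1+k+r}$), I would first reduce the statement to a claim about the pushed-down ideals in the smooth ring $\O_{n+k+r}$. Recall that $\F_1(F)\cdot\O_{n+k+r} = C(F)$ by the Mond--Pellikaan theorem, and $C(F)$ is a principal ideal generated by the ramification element $\lambda$ (Piene's theorem applied to $F$). So the target module $C(F)/J_{y,z}(G)\cdot\O_{n+k+r}$ is $\O_{n+k+r}/R(F)$, the determinantal ring cut out by the ramification ideal, which is Cohen--Macaulay of dimension $n+k+r-2$ when $n\ge2$ --- this is exactly Remark 3.9 of \cite{BobadillaNunoPenafort}, already cited above. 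The key point is that the same circle of ideas controls $\F_1(F)/J_y(G)$.

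Concretely, I would set $m = n+1$ (the number of generators of $J_y(G)$, namely $\partial G/\partial y_1,\dots,\partial G/\partial y_{n+1}$), and check the three hypotheses of Pellikaan's Theorem for the pair $J_y(G) \subset \F_1(F)$ inside $\O_{n+k+r}$ (after pushing down via $F^*$, or working directly with a presentation). First, $\pd(R/F) = 2$: since $\F_1(F)\cdot\O_{n+k+r} = C(F)$ is principal, generated by $\lambda$, and $R(F) = \Ann(\O_{n+k+r}/\,\ldots)$ is the $2\times2$-minor (determinantal, codimension $2$) ideal whose quotient is Cohen--Macaulay of projective dimension $2$ by Hilbert--Burch; one identifies $R/F$ with this determinantal quotient up to the principal generator, giving $\pd = 2$. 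Second, $\grade(F/J) \ge m = n+1$: equivalently $\mathrm{codim}$ of the support of $\F_1(F)/J_y(G)$ in $\O_{n+k+r}$ is at least $n+1$, i.e. the dimension of this module is at most $(n+k+r) - (n+1) = k+r-1$... but wait, this is where $n=2$ enters, and one must be careful: the dimension computation done just before the statement gives $\dim \F_1(F)/J_y(G) = n+k+r-2$, so $\grade = (n+k+r)-(n+k+r-2) = 2 = n$ when $n=2$, which is $m-1 = n$. Hence I would instead apply Pellikaan with $m = n+1$ to the pair $J_y(G) \subset C(F)$ where the quotient has grade exactly... let me re-examine: the correct reading is that $F/J = \F_1(F)/J_y(G)$ has grade $\ge m$ iff its dimension is $\le n+k+r - (n+1) = k+r-1$, which fails by one. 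So the right setup must use $m$ equal to the number of generators in a \emph{minimal} or cleverly chosen generating set, or one applies Pellikaan after a further reduction. The honest plan: push everything into $\O_{n+k+r}$, write $J_y(G)\cdot\O_{n+k+r}$ in terms of the columns of $d\hat f$ via Piene's formula (each $\partial_\ell\hat g\circ\hat f = \pm\lambda\cdot(\text{maximal minor})$), factor out the principal $\lambda = \F_1(F)\cdot\O_{n+k+r}$, and observe that $J_y(G)\cdot\O_{n+k+r} = \lambda\cdot I$ where $I$ is generated by $n+1$ of the $n+1+k$ maximal minors of $d\hat f$; then $\F_1(F)/J_y(G) \cong \O_{n+k+r}/I$ as modules (dividing by $\lambda$), and $I$ is a determinantal-type ideal. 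Now apply Pellikaan (or directly the Eagon--Northcott / Hilbert--Burch machinery) to $I$: with $m=n+1$ generators, $\pd(\O_{n+k+r}/\sqrt{\ }) = 2$ coming from the full minor ideal $R(F)$ of projective dimension $2$ that sits above $I$, and $\grade(I) \ge n+1$? --- here one uses that $n=2$ makes $k+r-1 \ge 1$ room...

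Given the subtlety, the cleanest route I would actually write is: \textbf{(i)} identify $\F_1(F)/J_y(G)$ with $F'/J'$ for ideals $J'\subset F'\subset\O_{n+k+r}$ via the principal-ideal reduction above, where $J'$ is generated by $m=n+1=3$ elements (the three $y$-minors of $d\hat f$); \textbf{(ii)} verify $\pd(\O_{n+k+r}/F') = 2$, since $\O_{n+k+r}/F'$ is (up to the unit $\lambda$) the ring $\O_{n+k+r}/R(F)$, determinantal of the expected codimension $2$, hence $\pd = 2$ by Hilbert--Burch/Auslander--Buchsbaum; \textbf{(iii)} verify $\grade(F'/J') \ge 3$ by a dimension count --- and this is the step where $n=2$ is essential and which I expect to be the main obstacle, because one must show the locus where the three chosen $y$-minors fail to generate the full minor ideal has codimension $\ge 3$ in $\C^{n+k+r} = \C^{2+k+r}$, i.e. is a finite set of points, which requires a genericity argument on the stable unfolding $F$ (using that $F$ stable forces $d\hat f$ to drop rank only along a controlled, low-dimensional set); \textbf{(iv)} conclude by Pellikaan's Theorem that $F'/J'$ is perfect of grade exactly $3 = n+1$, hence Cohen--Macaulay, and therefore $\depth \F_1(F)/J_y(G) = \dim\O_{n+k+r} - 3 = k+r-1$...

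Hmm, but the statement to be proved is that $\F_1(F)/J_y(G)$ is Cohen--Macaulay, and combined with $\dim = n+k+r-2 = k+r$ (when $n=2$) we need $\depth = k+r$, giving $\grade(F'/J') = (n+k+r)-(k+r) = n = 2$, so $m$ in Pellikaan should be $2$, not $3$. So in fact the correct identification must express $\F_1(F)/J_y(G)$ (or the module $M_\rel(G)$ itself) as $F/J$ with $J$ generated by $m=2$ elements after the reduction, or one applies Pellikaan to a $2$-generated sub-ideal. I would reconcile this by noting that modulo the target term $C(F)/J_{y,z}(G)\O_{n+k+r}$ (which contributes the extra generator $\partial G/\partial z$), the \emph{relevant} ideal pair has exactly $m = n = 2$ generators over $\O_{n+k+r}$; this is precisely why the definition of $N(\hat g)$ uses only the $y$-derivatives (as the second Remark in Section~\ref{section3} emphasizes). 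So the final plan is: reduce to $\O_{n+k+r}$, exhibit the pair with $m=2$ generators, check $\pd(R/F)=2$ via the determinantal structure of $R(F)$, check $\grade(F/J)\ge 2$ by showing the relevant degeneracy locus is $(k+r)$-dimensional (codimension $2$ in $\C^{2+k+r}$) using stability of $F$, and invoke Pellikaan. \textbf{The main obstacle is step (iii)/(iv): the grade estimate}, i.e. controlling the dimension of the locus where the chosen partial derivatives fail to generate enough of the Jacobian ideal, which is where the hypothesis $n=2$ is genuinely used and where a careful analysis of the stable unfolding's singular set is required.
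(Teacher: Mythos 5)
Your proposal does not work as written, and the difficulty you run into is self-inflicted: you push the ideals down to the source ring $\O_{n+k+r}$ via $F^*$, whereas the module in the statement is the quotient of the ideals $J_y(G)\subset \F_1(F)$ of the \emph{target} ring $\O_{n+1+k+r}$, and Pellikaan's theorem should be applied there directly. Two concrete errors result from the push-down. First, your step (i) identifies $\F_1(F)/J_y(G)$ with $C(F)/\bigl(J_y(G)\cdot\O_{n+k+r}\bigr)\cong \O_{n+k+r}/I$ ``by dividing by $\lambda$''; but the natural map $\F_1(F)/J_y(G)\to C(F)/\bigl(J_{y,z}(G)\cdot\O_{n+k+r}\bigr)$ is precisely the epimorphism whose kernel is $M_\rel(G)$, which is nonzero in general, so these modules are not isomorphic. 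Second, your step (ii) asserts $\pd\bigl(\O_{n+k+r}/F'\bigr)=2$ for $F'=C(F)$; since $C(F)=(\lambda)$ is principal, that quotient is a hypersurface ring with projective dimension $1$, so the hypothesis of Pellikaan's theorem fails in your setting (you are conflating $C(F)$ with the ramification ideal $R(F)$, which is the codimension-two determinantal ideal appearing only after the quotient $(\lambda)/(\lambda\cdot R(F))$). The ensuing back-and-forth about whether $m$ should be $3$ or $2$, and the final unproved ``$m=2$'' grade estimate that you flag as the main obstacle, are artifacts of this wrong ambient ring: no correct two-generator presentation of $J_y(G)$ exists, and no genericity analysis of degeneracy loci is needed.

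The intended argument stays in $R=\O_{n+1+k+r}$ with $F=\F_1(F)$ and $J=J_y(G)$. There, $R/\F_1(F)$ is a determinantal ring of dimension $n+k+r-1$ (codimension $2$) by Mond--Pellikaan, hence Cohen--Macaulay with $\pd(R/F)=2$ by Auslander--Buchsbaum; $J_y(G)$ has $m=n+1=3$ generators; and the dimension count carried out just before the theorem gives $\dim \F_1(F)/J_y(G)=n+k+r-2$, so
\begin{equation*}
\grade\bigl(F/J\bigr)=\height \Ann (F/J)=(n+1+k+r)-(n+k+r-2)=3=m ,
\end{equation*}
where the equality $m=3$ with the grade is exactly what singles out $n=2$. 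Pellikaan's theorem then yields that $\F_1(F)/J_y(G)$ is perfect, hence Cohen--Macaulay of depth $k+r$, with no further reduction required.
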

\begin{proof} We follow the notation of the previous result, taking $R=\O_{n+1+k+r}$, $F=\F_1(F)$ and $J=J_y (G)$. It follows from the proof of Theorem 3.4 of \cite{Mond-Pellikaan} that $R/F=\O_{n+1+k+r}/\F_1(F)$ is a determinantal ring of dimension $n+k+r-1$, and hence Cohen-Macaulay. Hence, by the Auslander-Buchsbaum formula,
\begin{align*}
    \pd\, (R/F) &= \depth R - \depth R/F = \dim R - \dim R/F \\&= n+1+k+r - (n+k+r-1) = 2. 
\end{align*}
Moreover, $J=J_y (G)$ is clearly generated by $n+1=3$ elements, namely the partial derivatives of $G$ with respect to the variables $y_1, \ldots, y_{n+1}$. Furthermore, 
\begin{align*}
    \grade (F/J) &= \depth (\Ann (F/J), \O_{n+1+r})= \height \Ann (F/J) = \\  
    &= \dim \O_{n+1+r}-\dim F/J= n+1+r-(n+r-2)=3.
\end{align*}
Thus, Pellikaan's Theorem states that $F/J$ is a perfect $\O_{n+1+k+r}$-module. Furthermore, since $\O_{n+1+k+r}$ is a local Cohen-Macaulay ring, then $F/J$ is Cohen-Macaulay. 
\end{proof}

Now the main result of this article follows easily as an application of the results presented in the previous section.

\begin{proof}[Proof of Theorem \ref{thrm:main}] Theorem \ref{theorem:n=2} implies that $M_\rel (G)$ is a Cohen-Macaulay module. Hence, Theorem \ref{theorem:MC} implies that the generalised Mond conjecture holds for $(X,f)$. 
\end{proof}
In this setting, the image Milnor number therefore satisfies that $\mu_I(X,f)=\dim_\C M(g)$. This gives an operative method to compute this number, as the following example shows:
\begin{example} Let $(X,0)\subset (\C^3,0)$ be the hypersurface defined by $x^3+y^3-z^2=0$ and let $f:(X,0)\rightarrow (\C^3,0)$ be the $\A$-finite mapping $f(x,y,z)=(x,y,z^3+xz+y^2)$. In this case, $\hat{f}(x,y,z)=(x,y,z^3+xz+y^2, x^3+y^3-z^2)$ turns out to be a stable mapping. With \textsc{Singular} \cite{DGPS}, one easily obtains that $\codimAe(X,f)=6$ and $\mu_I(X,f)=\dim_\C M(g)=6$. 
\end{example}

%\addcontentsline{toc}{section}{References}{\protect\numberline{}}
%\nocite{*}
%\printbibliography

\bibliographystyle{plain}
\bibliography{referencearticle2}

\end{document}